\newcommand{\CM}{Cohen-Macaulay}
\newcommand{\rt}{\rightarrow}
\newcommand{\sub}{\subseteq}
\theoremstyle{plain}
\newtheorem{theorem}{Theorem}[section]
\newtheorem{corollary}[theorem]{Corollary}
\newtheorem{lemma}[theorem]{Lemma}
\theoremstyle{definition}
\newtheorem{definition}[theorem]{Definition}
\newtheorem{remark}[theorem]{Remark}
\theoremstyle{remark}
\begin{document}
	\title[Associated graded modules]{On associated graded modules of maximal Cohen-Macaulay modules over hypersurface rings }
	\author{Ankit Mishra}
	\email{ankitmishra@math.iitb.ac.in}
	
	\author{ Tony~J.~Puthenpurakal}
	\email{tputhen@math.iitb.ac.in}
	
	\address{Department of Mathematics, IIT Bombay, Powai, Mumbai 400 076}

	\date{\today}
	\subjclass{Primary 13A30; Secondary  13D40, 13C15,13H10}
	
	\keywords{maximal Cohen-Macaulay module, reduction number, Ratliff-Rush filtration, associated graded module, hypersurface ring}
	
	\begin{abstract}
		Let $A=Q/(f)$ where $(Q,\mathfrak{n})$ be a complete regular local ring of dimension $d+1$, $f\in \mathfrak{n}^i\setminus\mathfrak{n}^{i+1}$ for some $i\geq 2$ and $M$ an MCM $A-$module with $e(M)=\mu(M)i(M)+1$ then we  prove that depth $G(M)\geq d-1$. If $(A,\mathfrak{m})$ is a complete hypersurface ring of dimension $d$ with infinite residue field and $e(A)=3$, let $M$ be an MCM $A$-module with $\mu(M)=2$ or $3$ then we  prove that depth $G(M)\geq d-\mu(M)+1$.  Our paper is the first systematic study of depth of associated graded modules of MCM modules over hypersurface rings
			\end{abstract}
	\maketitle
	\section{Introduction}
	
	Let $(A,\mathfrak{m})$ be Noetherian local ring of dimension $d$ and $M$ a finite \CM\ $A$-module of dimension $r$. Let $G(A)=\bigoplus_{n\geq0}\mathfrak{m}^n/\mathfrak{m}^{n+1}$ be associated graded ring of $A$ with respect to $\mathfrak{m}$ and $G(M)=\bigoplus_{n\geq0}{\mathfrak{m}^nM}/{\mathfrak{m}^{n+1}M}$ be associated graded module of $M$ with respect to $\mathfrak{m}$. Now $\mathcal{M}=\bigoplus_{n\geq1}\mathfrak{m}^n/\mathfrak{m}^{n+1}$ is irrelevant maximal ideal of $G(A)$ we set depth $G(M)$=grade$(\mathcal{M},G(M))$. If $L$ be an $A$-module then minimal number of generators of $L$ is denoted by  $\mu(L)$ and its length is denoted by $\ell(L)$.
	
	We know that Hilbert-Samuel function of $M$ with respect to $\mathfrak{m}$ is
	$$H^1(M,n)=\ell({M}/{\mathfrak{m}^{n+1}M})\ \text{for all}\ n\geq0.$$
	There exists a polynomial $P_M(z)$ of degree $r$  such that
	  $$H^1(M,n)=P_M(n)\ \text{for}\ n\gg0.$$
	This polynomial can be written as $$P_M(X)=\sum_{i=0}^{r}(-1)^ie_i(M)\binom{X+r-i}{r-i}$$
	These coefficients $e_i(M)'$s are integers and known as {\it Hilbert coefficients} of $M$.
	
	We know that Hilbert series of $M$ is formal power series $$H_M(z)=\sum_{n\geq0}\ell(\mathfrak{m}^nM/\mathfrak{m}^{n+1}M)z^n$$
	We can write $$ H_M(z)=\frac{h_M(z)}{(1-z)^r},\ \text{where}\ r=dimM $$
	
	Here, $h_M(z)=h_0(M)+h_1(M)z+\ldots+h_s(M)z^s\in \mathbb{Z}[z]$ and $h_M(1)\neq0$. This polynomial is know as {\it h-polynomial} of $M$.

	If we set $f^{(i)}$ to denote $i$th formal derivative of  a polynomial $f$ then it is easy to see that $e_i(M)=h_M^{(i)}(1)/i!$ for $i=0,\ldots,r$.  It is  convenient to set $e_i(M)=h_M^{(i)}(1)/i!$ for all $i\geq0.$
	
	We know that if $(A,\mathfrak{m})$ is \CM\ with red$(A)\leq2$ then $G(A)$ is \CM\ (see\cite[Theorem 2.1]{S}). 
	
	If $M$ is a \CM \ $A$-module with red$(M)\leq1$ then  $G(M)$ is \CM \ (see \cite[Theorem 16]{Pu0}), but if red$(M)=2$, then $G(M)$ need not be \CM\ (see \cite[Example 3.3]{PuMCM}).
	
	Here  we consider maximal \CM\ (MCM) modules over a \CM\ local ring $(A,\mathfrak{m})$. We know that if $A$ is a regular local ring then $M$ is free, say $M\cong A^s$. This implies $G(M)\cong G(A)^s$ is \CM.
	
	The next case is when $A$ is a hypersurface ring. {\it For convenience we assume $A=Q/(f)$ where $(Q,\mathfrak{n})$ is a regular local ring with infinite residue field and $f\in \mathfrak{n}^2$.} 
	
	If $f\in \mathfrak{n}^2\setminus\mathfrak{n}^3$ then $A$ has minimal multiplicity. It follows that any MCM module $M$ over $A$ has minimal multiplicity. So $G(M)$ is \CM.
	
	One of the cases of interest for us was when  $f\in \mathfrak{n}^3\setminus\mathfrak{n}^4$. Note in this case red$(A)=2$. So if $M$ is any MCM $A$-module then red$(M)\leq2$. In this case $G(M)$ need not \CM\ (see  \cite[Example 3.3]{PuMCM}).
	
	We know that if red$(M)\leq 1$ then $M$ has minimal multiplicity. This implies $G(M)$ is \CM\ (see \ref{minmulM}). Furthermore, if red$(M)=0$ then $M$ is an Ulrich module ({\it an MCM module $M$ is said to be Ulrich module if $e(M)=\mu(M)$}).
	
	Notice if $M$ is an MCM module over $A$ then projdim$_Q(M)=1$. So, $M$ has a minimal presentation over $Q$ 
	$$0\rt Q^{\mu(M)}\xrightarrow{\phi}Q^{\mu(M)}\rt M\rt 0.$$
	We investigate $G(M)$ in terms of invariants of a minimal presentation of $M$ over $Q$. \\
	Set $i(M)=$ max\{$i|$all entries of  $\phi$ are in $\mathfrak{n}^i$\}. Then
	from \cite[Theorem 2]{PuMCM} we know that $e(M)\geq \mu(M)i(M)$ for any MCM module $M$ over a hypersurface ring; in that paper, it is given that if $e(M)=\mu(M)i(M)$ then $G(M)$ is \CM.
	
	Here we  consider the case when   $e(M)=\mu(M)i(M)+1$ and prove that:
	\begin{theorem}\label{thm1.1}
		Let $(Q,\mathfrak{n})$ be a complete regular local ring of dimension $d+1$ with infinite residue field. Let $g\in \mathfrak{n}^i\setminus\mathfrak{n}^{i+1}$ with $i\geq 2$. Let $(A,\mathfrak{m})=(Q/(g),\mathfrak{n}/(g))$  and $M$ be an MCM $A$-module. Now  if $e(M)=\mu(M)i(M)+1$ then depth$G(M)\geq d-1$ and $h_M(z)=\mu(M)(1+z+\ldots+z^{i(M)-1})+z^s$ where $s\geq i(M)$. Furthermore, $G(M)$ is \CM\ if and only if $s=i(M)$.  
	\end{theorem}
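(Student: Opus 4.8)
The plan is to realise $G(M)$ explicitly as a quotient of a free module over the polynomial ring $G(Q)$, read the ``head'' of the $h$-polynomial off the leading forms of $\phi$, and then translate the depth assertion into a bound on projective dimension via Auslander--Buchsbaum.

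First I would set $S=\bigoplus_{n\ge0}\n^{n}/\n^{n+1}$, a polynomial ring in $d+1$ variables since $Q$ is regular, so that $G(A)=S/(g^{*})$ with $g^{*}$ the leading form of $g$. Since $\m^{n}M=\n^{n}M$, we may compute $G(M)$ over $S$: from the presentation $Q^{\mu(M)}\xrightarrow{\phi}Q^{\mu(M)}\to M\to0$ we get $G(M)=S^{\mu(M)}/N^{*}$, where $N^{*}$ is the initial submodule of $N=\image\phi$. Because every entry of $\phi$ lies in $\n^{i(M)}$, we have $N\subseteq\n^{i(M)}Q^{\mu(M)}$, so $N^{*}$ has no components in degrees $<i(M)$; hence $G(M)_{n}=S^{\mu(M)}_{n}$ and $\ell(\m^{n}M/\m^{n+1}M)=\mu(M)\binom{n+d}{d}$ for $0\le n<i(M)$. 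Comparing with $\mu(M)(1+z+\cdots+z^{i(M)-1})/(1-z)^{d}=\mu(M)(1-z^{i(M)})/(1-z)^{d+1}$, which has the same coefficients in these degrees, we find $h_{M}(z)=\mu(M)(1+z+\cdots+z^{i(M)-1})+Q(z)$ for a polynomial $Q$ with $Q(z)/(1-z)^{d}$ supported in degrees $\ge i(M)$, and $Q(1)=e(M)-\mu(M)i(M)=1$. Finally, since $G(M)$ has codimension one in $S$ and $\depth_{S}G(M)=\depth G(M)$, the Auslander--Buchsbaum formula turns ``$\depth G(M)\ge d-1$'' into ``$\operatorname{projdim}_{S}G(M)\le2$'', and the \CM\ case into ``$\operatorname{projdim}_{S}G(M)\le1$''.

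The heart of the proof, and the step I expect to be hardest, is to pin down $N^{*}$ precisely and thereby force $Q(z)=z^{s}$. Let $L\subseteq S^{\mu(M)}$ be generated by the degree-$i(M)$ leading forms of the columns of $\phi$, so $L\subseteq N^{*}$. By \cite[Theorem~2]{PuMCM} the equality $e(M)=\mu(M)i(M)$ occurs exactly when these leading forms remain independent, i.e.\ when the leading-form matrix is nonsingular; since here $e(M)=\mu(M)i(M)+1$ is strictly larger, that matrix must be singular, and I would use the matrix factorisation $\phi\psi=\psi\phi=g\cdot\mathrm{Id}$, together with the fact that the $\n$-adic order of $\det\phi$ governs $e(M)$, to show that the excess of $N^{*}$ over $L$ is a single form $\theta$ of some degree $s\ge i(M)$. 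This would simultaneously give $Q(z)=z^{s}$ and the presentation $S^{\mu(M)}(-i(M))\oplus S(-s)\to S^{\mu(M)}\to G(M)\to0$. The delicate point is ruling out any cancellation or spreading of the excess multiplicity one across several degrees: this is where the matrix factorisation, rather than the Hilbert series alone, must be used.

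Granting the presentation above, the remaining assertions follow. The codimension-one module $G(M)$ then has a free resolution of length at most two over $S$, so $\operatorname{projdim}_{S}G(M)\le2$ and $\depth G(M)\ge d-1$. For the \CM\ criterion I would reduce to dimension one: choosing generic superficial elements $x_{1},\dots,x_{d-1}$ (which preserve $\mu(M)$, $e(M)$ and, by genericity, $i(M)$), the one-dimensional module $\overline{M}=M/(x_{1},\dots,x_{d-1})M$ again satisfies the hypotheses, so its $h$-polynomial has the same shape $\mu(M)(1+\cdots+z^{i(M)-1})+z^{s}$. Since $G(\overline{M})$ is generated in degree zero, a superficial element acts as a nonzerodivisor --- equivalently $G(\overline{M})$ is \CM\ --- precisely when the $h$-vector has no internal zero, i.e.\ when $s=i(M)$; when $s>i(M)$ the monomial $z^{s}$ produces a socle element and $\depth G(\overline{M})=0$. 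Sally's machine lifts this in both directions, giving that $G(M)$ is \CM\ if and only if $s=i(M)$.
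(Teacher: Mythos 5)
Your opening reduction is fine: since all entries of $\phi$ lie in $\mathfrak{n}^{i(M)}$, the initial submodule $N^{*}$ lives in degrees $\geq i(M)$, so $G(M)$ agrees with $S^{\mu(M)}$ below degree $i(M)$, giving the head of the $h$-polynomial and $Q(1)=1$. But the step you yourself call the heart is not merely hard — as formulated it is false, and one of this paper's own examples shows it. Take $Q=k[[x,y]]$ and $\phi=\begin{pmatrix} y^{2} & 0\\ x & y \end{pmatrix}$ (Section 6, Case (1), item (4)). Here $i(M)=1$, $e(M)=3=\mu(M)i(M)+1$, and $h_M(z)=2+z^{2}$, so $s=2$; yet $N^{*}$ is minimally generated by $(0,x),(0,y)$ in degree $1$ together with $(y^{3},0)$ in degree $3=s+1$, \emph{not} $s$, and the minimal free resolution is
$$0\to S(-2)\to S(-1)^{2}\oplus S(-3)\to S^{2}\to G(M)\to 0.$$
The obstruction is general: the Hilbert series of $G(M)$ over $S$ is $K(z)/(1-z)^{d+1}$ with $K(z)=h_M(z)(1-z)=\mu-\mu z^{i}+z^{s}-z^{s+1}$ when $s>i$. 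Your claimed presentation $S^{\mu}(-i)\oplus S(-s)\to S^{\mu}\to G(M)\to 0$ is automatically minimal (all entries have positive degree), so the remaining syzygy modules would have to have alternating generating function $K(z)-\mu+\mu z^{i}+z^{s}=2z^{s}-z^{s+1}$; a resolution realizing this must have length $\geq 3$, i.e.\ depth $G(M)\leq d-2$, contradicting the very bound you want. In other words, when $s>i(M)$ the extra first syzygy necessarily sits in degree $s+1$ and the term $+z^{s}$ comes from a \emph{second} syzygy in degree $s$: the tail of the $h$-polynomial arises from the interplay of a relation and a second syzygy, so no identification of generators of $N^{*}$ alone can "give $Q(z)=z^{s}$" as you assert. (Your definition of $L$ is also off: columns of $\phi$ need not have order exactly $i(M)$; for $\phi=\mathrm{diag}(y^{i},\dots,y^{i},y^{i+1})$ the last column has order $i+1$.)

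Two supporting principles you invoke are also false as stated. First, "a codimension-one module over $S$ has projective dimension at most two" fails: $S/(x)\oplus k$ over $k[x,y,z]$ has codimension one and projective dimension three. The statement that would do the work is different: a torsion $S$-module presented by $S^{\mu+1}\to S^{\mu}$ has a rank-one kernel which is a second syzygy, hence reflexive, hence free since $S$ is a UFD, giving projdim $\leq 2$ — but this rests exactly on the unproven (and, per the above, misstated) generation claim for $N^{*}$. Second, "the $h$-vector has no internal zero iff $G$ is Cohen--Macaulay" fails in the direction you need: in Case (3), Subcase (ii) of Theorem \ref{muM=3}, $h_{M_2}(z)=3+z+z^{2}$ has no internal zeros yet depth $G(M_2)=0$. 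The implication $s=i(M)\Rightarrow$ CM requires the mechanism the paper uses: modulo a maximal $\phi$-superficial sequence the Artinian reduction is $Q'/(y^{i})^{\mu-1}\oplus Q'/(y^{i+1})$, so its $h$-polynomial is exactly $\mu(1+\cdots+z^{i-1})+z^{i}$, and depth $G(M)\geq 1$ iff $h_M$ equals the $h$-polynomial one step down (\ref{Property}(9)), lifted by Sally descent (\ref{Sally-des}). The paper's proof runs entirely along those lines — reduction to dimension two, Singh's equality to get $\mathfrak{m}^{n+1}M\cap JM=J\mathfrak{m}^{n}M$ for $n<i(M)$ and $v_{i(M)-1}=1$, the Rossi--Valla criterion \cite[Theorem 4.4]{Rossi} for depth $\geq 1$, then Sally descent — and never needs $N^{*}$. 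Your route might be completable after redoing the degree bookkeeping (relations in degrees $i,\dots,i,s+1$; second syzygy in degree $s$), but as written its central claim is wrong, not just unproved.
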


	If $\mu(M)=r$ and $det(\phi) \in \mathfrak{n}^r\setminus\mathfrak{n}^{r+1}$  then we know that $e(M)=\mu(M)$ (see \cite[Theorem 2]{PuMCM}). So $M$ is an Ulrich module. This implies $G(M)$ is \CM. Here we consider the case when $det(\phi) \in \mathfrak{n}^{r+1}\setminus\mathfrak{n}^{r+2}$ and  prove 
	\begin{corollary}\label{4}
		Let $({Q},\mathfrak{n})$ be a complete regular local ring with infinite residue field of dimension $d+1$ with $d\geq 0$. Let $M$ be a $Q$-module with minimal presentation $$0\rt Q^r\xrightarrow{\phi} Q^r \rt M \rt 0$$
		Now if $\phi = [a_{ij}]
		$
		where $a_{ij} \in \mathfrak{n}$ with  $f=det(\phi) \in \mathfrak{n}^{r+1}\setminus \mathfrak{n}^{r+2}$, then depth$G(M)\geq d-1$. In this case if  $red(M)\leq2$  we can also prove that
		\begin{enumerate}
			\item $G(M)$ is \CM \  if and only if $h_M(z)=r+z$.
			\item depth$G(M)=d-1$ if and only if $h_M(z)=r+z^2$.
		\end{enumerate}
	\end{corollary}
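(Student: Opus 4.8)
The plan is to deduce Corollary~\ref{4} from Theorem~\ref{thm1.1} by reading $M$ as a maximal \CM\ module over the hypersurface $A=Q/(f)$ and checking that its invariants satisfy $e(M)=\mu(M)i(M)+1$. First I would observe that $\phi\cdot\operatorname{adj}(\phi)=\det(\phi)I_r=fI_r$, so $f$ annihilates $M=\operatorname{coker}(\phi)$ and $M$ becomes an $A$-module; the pair $(\phi,\operatorname{adj}\phi)$ is a matrix factorization of $f$, so $M$ is MCM over $A$, and minimality of the presentation gives $\mu(M)=r$. Next I would compute $i(M)$: the hypothesis $a_{ij}\in\mathfrak{n}$ gives $i(M)\geq 1$, and for $r\geq 2$ one cannot have every entry in $\mathfrak{n}^2$, for otherwise $\det(\phi)\in\mathfrak{n}^{2r}\subseteq\mathfrak{n}^{r+2}$ (since $2r\geq r+2$), contradicting $f\notin\mathfrak{n}^{r+2}$; hence $i(M)=1$. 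The degenerate case $r=1$ is $M=A=Q/(a_{11})$ with $a_{11}\in\mathfrak{n}^2\setminus\mathfrak{n}^3$, where $i(M)=2$ and $e(M)=\mu(M)i(M)$; here $G(M)=G(A)$ is \CM\ with $h_M(z)=1+z=r+z$ by direct computation (using \cite[Theorem 2]{PuMCM}), so I would dispose of it separately, and it lands in case~(1).

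The key numerical input is the multiplicity, and I would show $e(M)=\operatorname{ord}_{\mathfrak{n}}(\det\phi)=r+1$. This follows from the associativity formula $e(M)=\sum_{\mathfrak{p}}\ell_{Q_{\mathfrak{p}}}(M_{\mathfrak{p}})\,e(\mathfrak{n},Q/\mathfrak{p})$, the sum running over the height-one primes $\mathfrak{p}\in\operatorname{Supp}(M)=V(\det\phi)$: over the DVR $Q_{\mathfrak{p}}$ the length of $\operatorname{coker}(\phi_{\mathfrak{p}})$ is the $\mathfrak{p}$-adic order of $\det\phi$, and summing over the minimal primes (each principal, as $Q$ is a regular, hence factorial, local ring) recovers $\operatorname{ord}_{\mathfrak{n}}(\det\phi)$; alternatively this is \cite[Theorem 2]{PuMCM}. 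With $f\in\mathfrak{n}^{r+1}\setminus\mathfrak{n}^{r+2}$ this yields $e(M)=r+1=\mu(M)\cdot 1+1=\mu(M)i(M)+1$, and $f$ has order $r+1\geq 2$. Theorem~\ref{thm1.1} then applies, giving $\depth G(M)\geq d-1$ together with $h_M(z)=\mu(M)\bigl(1+z+\cdots+z^{i(M)-1}\bigr)+z^s=r+z^s$ for some $s\geq 1$, and $G(M)$ is \CM\ if and only if $s=1$. This is exactly part~(1): $G(M)$ is \CM\ iff $h_M(z)=r+z$ (in fact this half needs no assumption on $\red(M)$).

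For part~(2) I would note that, by the above, $\depth G(M)\in\{d-1,d\}$, with value $d$ precisely when $s=1$; hence $\depth G(M)=d-1$ if and only if $s\geq 2$. The assumption $\red(M)\leq 2$ enters only through the standard bound $\deg h_M(z)\leq\red(M)$, which forces $s\leq 2$. Combining $s\geq 2$ with $s\leq 2$ gives $s=2$, i.e.\ $h_M(z)=r+z^2$; conversely $h_M(z)=r+z^2$ has $s=2\neq 1$, so $G(M)$ is not \CM\ and $\depth G(M)=d-1$. Since $e(M)=r+1>r=\mu(M)$, the module $M$ is not Ulrich, so $\red(M)\geq 1$, and the two cases $s=1,2$ correspond to $\red(M)=1,2$.

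I would expect the main obstacle to be the multiplicity identity $e(M)=\operatorname{ord}_{\mathfrak{n}}(\det\phi)$ --- making the associativity-formula argument airtight by correctly identifying the top-dimensional components of $\operatorname{Supp}M$ and the local lengths over the DVRs $Q_{\mathfrak{p}}$ --- together with cleanly isolating the exceptional rank-one case. Once $e(M)=r+1$ and $i(M)=1$ are secured, the remainder is a direct appeal to Theorem~\ref{thm1.1} plus the elementary inequality $\deg h_M\leq\red(M)$.
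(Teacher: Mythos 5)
Your proposal is correct, and its skeleton matches the paper's: check $\mu(M)=r$, $i(M)=1$, $e(M)=r+1$ so that $e(M)=\mu(M)i(M)+1$, invoke Theorem \ref{em=mum} to get $\operatorname{depth}G(M)\geq d-1$ and $h_M(z)=r+z^s$ with Cohen--Macaulayness equivalent to $s=1$, then use $\operatorname{red}(M)\leq 2$ together with \ref{dm1hpol} to force $s\in\{1,2\}$. The difference lies in how the numerical input is verified. The paper's proof of Corollary \ref{mu(r)} stays inside its $\phi$-superficial machinery: it reduces $M$ modulo a maximal $\phi$-superficial sequence, reads off $M_d\cong (Q'/(y))^{r-1}\oplus Q'/(y^2)$ from the Smith normal form over the DVR $Q'$ (using $v_{Q'}(\det(\phi\otimes Q'))=v_Q(\det\phi)=r+1$), and transfers $e$, $\mu$, $i$ back to $M$ by their invariance under $\phi$-superficial reduction. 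You instead prove $e(M)=v_Q(\det\phi)=r+1$ intrinsically via the associativity formula (factor $f$ in the UFD $Q$, compute lengths over the DVRs $Q_{\mathfrak{p}}$ at the height-one primes of $\operatorname{Supp}M=V(f)$, and use that order is a valuation since $G(Q)$ is a domain), and you get $i(M)=1$ from the elementary fact that entries all in $\mathfrak{n}^2$ would give $\det\phi\in\mathfrak{n}^{2r}\subseteq\mathfrak{n}^{r+2}$ when $r\geq 2$; in effect you reprove the part of \cite[Theorem 2]{PuMCM} that the paper uses silently. Each route has its merit: the paper's is uniform with the techniques used throughout and yields $h_{M_d}$ along the way, while yours makes the corollary independent of superficial-element arguments. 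A genuine plus of your version is that you isolate $r=1$, where $i(M)=2$ and $e(M)=\mu(M)i(M)$, so Theorem \ref{em=mum} does not apply; the paper's proof overlooks this case (its assertion $i(M)=i(M_d)=1$ fails for $r=1$), though the conclusion survives exactly as you argue, via the $e(M)=\mu(M)i(M)$ case of \cite[Theorem 2]{PuMCM}. Your further observations, that part (1) needs no hypothesis on $\operatorname{red}(M)$ and that the cases $s=1,2$ correspond to $\operatorname{red}(M)=1,2$, are both accurate.
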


	\begin{remark}
		Let $(A,\mathfrak{m})$ be complete hypersurface ring of dimension $d$ and $M$ be an MCM $A-$module with $\mu(M)=1$. Then we can write $A=Q/(f)$ where $(Q,\mathfrak{n})$ be a regular local ring of dimension $d+1$ and $f\in \mathfrak{n}^i\setminus\mathfrak{n}^{i+1}$. Since $\mu(M)=1$, $M$ has a minimal presentation $0\rt Q\xrightarrow{a} Q\rt M\rt 0$ where $a\in \mathfrak{n}$. This implies $M\cong Q/(a)Q$, so $G(M)$ is \CM.
	\end{remark}

	Now we consider the case when $\mu(M)=2$ and prove 
	\begin{theorem}\label{1}
		Let $(A,\mathfrak{m})$ be a complete hypersurface ring  of dimension $d$ with $e(A)=3$ and infinite residue field. Let  $M$ be  an MCM $A$-module with  $\mu(M)=2$, 
		 then depth$G(M)\geq d-1$.
	\end{theorem}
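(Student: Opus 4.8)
The plan is to pass to a matrix factorisation of $f$ and run a short case analysis on $\operatorname{ord}(\det\phi)$. Since $A$ is a complete hypersurface with $e(A)=3$, I write $A=Q/(f)$ with $(Q,\n)$ complete regular of dimension $d+1$ and $f\in\n^3\setminus\n^4$. As $M$ is MCM with $\mu(M)=2$, the theory of matrix factorisations gives $2\times2$ matrices $\phi,\psi$ over $Q$ with $\phi\psi=\psi\phi=fI_2$ and $M=\operatorname{coker}\phi$, where $0\rt Q^2\xrightarrow{\phi}Q^2\rt M\rt 0$ is the minimal presentation of the Remark's type. First I would dispose of the decomposable case: if $M=M_1\oplus M_2$ nontrivially then $\mu(M_i)=1$, so by the Remark each $M_i\cong Q/(a_i)$ is a hypersurface quotient with $G(M_i)$ \CM; hence $G(M)$ is \CM\ and $\depth G(M)=d$. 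So from now on I assume $M$ indecomposable; then $M$ has no free summand and the factorisation is reduced, i.e.\ all entries of both $\phi$ and $\psi$ lie in $\n$.

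Next I would record two numerical inputs. Because $\det\phi\cdot\det\psi=f^2$ we have $\det\phi\mid f^2$, so every irreducible factor of $\det\phi$ divides $f$; the additivity formula for multiplicity over the minimal primes of $A$ (each of the form $Q/(g_j)$ for an irreducible factor $g_j$ of $f$) then yields $e(M)=\operatorname{ord}(\det\phi)$. Combined with $\operatorname{ord}(\det\phi)+\operatorname{ord}(\det\psi)=6$ and the fact that each determinant has order $\geq2$ (a $2\times2$ determinant with entries in $\n$ lies in $\n^2$), this forces $e(M)=\operatorname{ord}(\det\phi)\in\{2,3,4\}$. Moreover all entries of $\phi$ lie in $\n^{i(M)}$, so $e(M)=\operatorname{ord}(\det\phi)\geq 2\,i(M)$ and hence $i(M)\leq 2$.

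The case analysis is then short. If $e(M)=2$ then $i(M)=1$ and $e(M)=\mu(M)i(M)$, so $G(M)$ is \CM\ by \cite[Theorem 2]{PuMCM}. If $e(M)=3$ then $i(M)=1$ and $e(M)=\mu(M)i(M)+1$, so Theorem \ref{thm1.1}, applied with $g=f$ and $i=3$, gives $\depth G(M)\geq d-1$; this is the only case that genuinely produces depth possibly equal to $d-1$. If $e(M)=4$ and $i(M)=2$ then again $e(M)=\mu(M)i(M)$ and $G(M)$ is \CM. The one remaining configuration is $e(M)=4$ with $i(M)=1$, and the crux of the plan is to show that it simply cannot occur for indecomposable $M$.

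To rule it out I would argue as follows. If $i(M)=1$ then, after permuting rows and columns, one entry $a:=\phi_{11}$ has order $1$. The identity $(\det\phi)\,\psi=\phi^{\mathrm{adj}}(\phi\psi)=f\,\phi^{\mathrm{adj}}$ shows, entry by entry, that $\det\phi$ divides $f\cdot g$ for every entry $g$ of $\phi$. Applying this to $g=a$ and using $\operatorname{ord}(fa)=4=\operatorname{ord}(\det\phi)$ in the UFD $Q$ forces $\det\phi=fa$ up to a unit; feeding this back into $\det\phi\mid fb,\,fc,\,fd$ gives $a\mid b,c,d$, so $a$ divides every entry of $\phi$. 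Writing $\phi=aP$ with $P_{11}$ a unit and clearing $P$ by elementary row and column operations over $Q$ exhibits $\phi$ as equivalent to $\operatorname{diag}(a,\,f\cdot u)$ for a unit $u$, whence $M\cong Q/(a)\oplus A$ is decomposable, a contradiction. Thus $e(M)=4$ forces $i(M)=2$, and in every case $\depth G(M)\geq d-1$. I expect this last step to be the main obstacle: the work is in converting the factorisation constraint $\det\phi\mid f\phi_{ij}$ into the divisibility $a\mid\phi_{ij}$ and thereby splitting off a free summand, while everything preceding it is bookkeeping with orders of determinants.
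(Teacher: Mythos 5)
Your proof is correct, and it reaches the paper's case analysis by a genuinely different reduction. The paper first splits off free summands (Theorem \ref{mu2frsum}) and, for $M$ with no free summand, passes modulo a maximal $\phi$-superficial sequence to the DVR $Q'=Q/(\underline{x})$, classifying $M_d\cong Q'/(y^{a_1})\oplus Q'/(y^{a_2})$; the crucial bound $a_1\le a_2\le 2$ --- which is exactly the exclusion of your configuration $e(M)=4$, $i(M)=1$, i.e.\ $(a_1,a_2)=(1,3)$ --- comes from $\operatorname{red}(M)\le 2$ together with Lemma \ref{Md no free}, whose proof rests on Eisenbud's theorem $M=Syz_1^A(L)$. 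You instead work intrinsically over $Q$: you get $e(M)=v_Q(\det\phi)$ from the additivity formula, the trichotomy $e(M)\in\{2,3,4\}$ from $v_Q(\det\phi)+v_Q(\det\psi)=6$ with both terms $\ge 2$ (reduced matrix factorization), and you exclude $e(M)=4$, $i(M)=1$ by the adjugate identity $(\det\phi)\psi=f\,\phi^{\mathrm{adj}}$, which forces $\phi\sim\operatorname{diag}(a,fu)$ and hence $M\cong Q/(a)\oplus A$, contradicting indecomposability --- the contrapositive of what Lemma \ref{Md no free} delivers over the DVR. After the reduction the two proofs use the same external inputs: \cite[Theorem 2]{PuMCM} when $e(M)=\mu(M)i(M)$, and Theorem \ref{thm1.1} when $e(M)=3=\mu(M)i(M)+1$ (the paper routes this middle case through Corollary \ref{4}, itself a corollary of Theorem \ref{thm1.1}). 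Your route buys self-containedness (no superficial-sequence machinery in the classification) and an explicit structural reason for the excluded case; its cost is generality, since ``the entries of $\phi^{\mathrm{adj}}$ are $\pm$ the entries of $\phi$'' is special to $2\times 2$ matrices, whereas the paper's lemma and DVR classification scale directly to the $\mu(M)=3$ analysis of Section 5. One point of hygiene: the additivity $v_Q(xy)=v_Q(x)+v_Q(y)$, which you use for $v_Q(fa)=4$ and $v_Q(f^2)=6$, comes from regularity of $Q$ (i.e.\ $G_{\mathfrak{n}}(Q)$ being a domain), not from the UFD property alone; this is standard and does not affect correctness.
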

	
	The next theorem deals with the case when $\mu(M)=3 $ and we have proved that
	
	\begin{theorem}\label{2}
		Let $(A,\mathfrak{m})$ be a complete hypersurface ring  of dimension $d$ with $e(A)=3$ and infinite residue field. Let  $M$ be an MCM $A$-module with   $\mu(M)=3$, then depth$G(M)\geq d-2$.
	\end{theorem}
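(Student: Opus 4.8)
The plan is to follow the strategy used for Theorem \ref{1}, adapted to a $3\times 3$ matrix factorization. First I would record the standing reductions. Since $A$ is a complete hypersurface with infinite residue field we may write $A=Q/(f)$ with $(Q,\mathfrak{n})$ complete regular of dimension $d+1$, and the hypothesis $e(A)=3$ forces $f\in\mathfrak{n}^3\setminus\mathfrak{n}^4$, so $\red(A)=2$ and hence $\red(M)\le 2$ for every MCM module $M$, as noted in the introduction. If $\red(M)\le 1$ then $M$ has minimal multiplicity and $G(M)$ is \CM\ by \ref{minmulM}, so $\depth G(M)=d\ge d-2$ and there is nothing to prove. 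Thus I would assume $\red(M)=2$, in which case the $h$-polynomial has the form $h_M(z)=3+h_1z+h_2z^2$ with $h_2\ge 1$ and $e(M)=3+h_1+h_2$.

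Next I would pass to a lower-dimensional situation. We may assume $M$ has no free direct summand: splitting off a free summand would lower $\mu(M)$ and, since $G(A)$ is \CM\ (because $\red(A)=2$), the bound would follow from Theorem \ref{1}; in particular the matrix factorization partner $\psi$ of $\phi$ also has all entries in $\mathfrak{n}$. Now, the residue field being infinite, I would choose a generic element $x\in\mathfrak{m}$ that is superficial for $M$; then $A/xA$ is again a complete hypersurface of dimension $d-1$ with $e(A/xA)=3$, and $M/xM$ is an MCM $A/xA$-module with $\mu(M/xM)=3$, $e(M/xM)=e(M)$ and the same $h$-polynomial. Since $h_M(z)$ is unchanged under such generic hyperplane sections, I would reduce the computation of $h_M(z)$ to the case $\dim A=1$ (equivalently $\dim Q=2$), where $M=\operatorname{coker}\phi$ for a $3\times 3$ matrix over a two-dimensional regular local ring and the Hilbert series is explicit. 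The key arithmetic input is that $\phi\psi=\psi\phi=f\cdot I_3$ gives $\det\phi\cdot\det\psi=f^3$; because $G(Q)$ is a domain the order is additive, so $\operatorname{ord}(\det\phi)+\operatorname{ord}(\det\psi)=3\operatorname{ord}(f)=9$ with each order at least $3$, while $e(M)=\operatorname{ord}(\det\phi)$. Together with $\red(M)=2$ this constrains the admissible pairs $(h_1,h_2)$ and, in particular, bounds $h_2$.

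With the shape of $h_M(z)$ in hand, I would apply the depth criterion that underlies Theorem \ref{1}, which, within this rigid class of MCM modules over hypersurfaces, bounds $\depth G(M)$ in terms of $h_M(z)$ and the behaviour of $M$ under the chosen superficial sequence: the defect $d-\depth G(M)$ is governed by the ``tail'' of $h_M(z)$ beyond its minimal-multiplicity part. For $\mu(M)=2$ this tail forces defect at most $1$; for $h_0(M)=\mu(M)=3$ the analogous analysis gives defect at most $2$, that is, $\depth G(M)\ge d-2$. The superficial-element machinery enters only to guarantee that $h_M(z)$ is preserved under the hyperplane sections of the previous step, so no circular induction on the depth itself is needed. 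The main obstacle I anticipate is the middle step: with $\mu(M)=3$ the matrix $\phi$ admits several normal forms modulo $\mathfrak{n}^2$, according to the rank of its leading-form matrix $\phi^{*}$ over $G(Q)$ and to whether $\phi$ decomposes into smaller blocks, and each must be handled separately. The hardest sub-case is the indecomposable one in which $\phi^{*}$ is most degenerate, since there $G(M)$ is farthest from \CM\ and the passage from the explicit $h_M(z)$ to the depth bound is tightest; it is exactly this degeneracy that can raise the defect to $2$ and explains why the bound for $\mu(M)=3$ is weaker than that for $\mu(M)=2$.
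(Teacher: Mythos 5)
Your preliminary reductions (splitting off free summands via \cite[Proposition 1.2.9]{BH}, and disposing of $\red(M)\leq 1$ by \ref{minmulM}) are fine, but the central step of your plan is circular and, in fact, false. You assert that $h_M(z)$ is unchanged under generic hyperplane sections and use this to reduce the computation of $h_M(z)$ to dimension one. By \ref{Property}(3) one only has $h_M(z)=h_{M/xM}(z)-(1-z)^r b_{x,M}(z)$, and by \ref{Property}(9) the equality $h_M(z)=h_{M/xM}(z)$ holds \emph{if and only if} $\depth G(M)\geq 1$; so carrying $h_M(z)$ down to dimension one requires $\depth G(M_j)\geq 1$ at every intermediate stage, which is essentially the statement being proved and which genuinely fails here, since the bound $d-2$ of the theorem is attained. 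Concretely, in the paper's hardest case, where $M_d\cong Q'/(y)\oplus Q'/(y^2)\oplus Q'/(y^2)$ (equivalently $e(M)=5$), the one-dimensional reduction $M_2=M/(x_1,x_2)M$ can have $h_{M_2}(z)=3+z+z^2$ while $M$ itself has $h_M(z)=3+3z^2-z^3$ and $\depth G(M)=d-2$. The degree-$3$ term and the negative coefficient also refute your claimed shape $h_M(z)=3+h_1z+h_2z^2$ with $h_2\geq1$: the bound $\deg h_M(z)\leq\red(M)$ is a dimension-one fact (\ref{dm1hpol}) and does not persist in higher dimension once some $G(M_j)$ has depth zero.

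The second gap is that the ``depth criterion underlying Theorem \ref{1}'' that you invoke --- the defect $d-\depth G(M)$ being governed by the tail of $h_M(z)$ --- does not exist, in this paper or in general; $h_M(z)$ alone does not determine $\depth G(M)$. What actually closes the $e(M)=5$ case in the paper is a chain of module-specific arguments: a Ratliff--Rush computation (\ref{RR-2}) giving $\ell(\widetilde{\mathfrak{m}M_2}/\mathfrak{m}M_2)\leq 1$ and hence $b_1(x_3,M_2)\leq 1$; the nonnegativity of $e_2$ (\ref{e2>}) forcing $\sum_i b_i(x_2,M_1)\leq 1$; the exact sequences \ref{exact seq} and \ref{exact d}; and, in the worst subcase, where $\depth G(M_1)=0$ so that Sally-descent (\ref{Sally-des}) gives nothing, the complete intersection approximation theorem \cite[Theorem 5.1]{apprx}: if $\delta=\sum_n\ell\bigl(\mathfrak{m}^{n+1}M\cap(\underline{x})M/(\underline{x})\mathfrak{m}^nM\bigr)\leq 2$ then $\depth G(M)\geq d-\delta$. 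Your outline contains no tool that could play this last role. To give credit where due, your determinant arithmetic is sound and recovers exactly the paper's case division: with no free summand both $\det\phi$ and $\det\psi$ have order at least $3$, their orders sum to $9$, so $e(M)=v_Q(\det\phi)\in\{3,4,5,6\}$, and the cases $e(M)=3,4,6$ are easy (Ulrich, Corollary \ref{mu(r)}, and $e(M)=\mu(M)i(M)$ respectively), just as in the paper, which organizes the same split via the elementary divisors of $\phi$ over the DVR $Q/(\underline{x})$. But the entire difficulty of the theorem is concentrated at $e(M)=5$, precisely where your argument is missing.
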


Here is an overview of the contents of this paper. In  section 2, we give some preliminary which we have used in the paper.  In section 3, we prove Theorem \ref{thm1.1}, and as its corollary we prove  Corollary \ref{4}. In section 4, we discuss $\mu(M)=2$ case and prove Theorem \ref{1}. In section 5, we discuss $\mu(M)=3$ case and prove Theorem \ref{2}.  In the last section examples are given.

\section{Priliminaries}
Let $(A,\mathfrak{m})$ be a Noetherian local ring of dimension $d$, and $M$ an $A$-module of dimension $r$.
\s An element $x\in \mathfrak{m}$ is said to be a superficial element of $M$ if there exists an integer $n_0>0$ such that $$(\mathfrak{m}^nM:_Mx)\cap \mathfrak{m}^{n_0}M=\mathfrak{m}^{n-1}M\ \text{for all}\ n>n_0$$
We know that if residue field $k=A/\mathfrak{m}$ is infinite then superficial elements always exist (see \cite[Pg 7]{Sbook}). A sequence of elements $x_1,\ldots,x_m$ is said to be superficial sequence if $x_1$ is $M$-superficial and $x_i$ is $M/(x_1,\ldots,x_{i-1})M$-superficial for $i=2,\ldots,m.$

\begin{remark}
	
	\begin{enumerate}
		\item 	If $x$ is $M-$superficial and regular then we have $(\mathfrak{m}^nM:_Mx)=\mathfrak{m}^{n-1}M$ for all $n\gg 0.$
		
		\item If depth$M>0$ then it is easy to show that  every $M$-superficial element is also $M-$ regular.
	\end{enumerate}

\end{remark}

\s \label{Base change} Let $f:(A,\mathfrak{m})\rt (B,\mathfrak{n})$ be a flat local  ring homomorphism with $\mathfrak{m}B=\mathfrak{n}$. If $M$ is an $A$-module set $M'=M\otimes_A B$, then  following facts are well known 
\begin{enumerate}
	\item $H(M,n)=H(M',n)$ for all $n\geq0$.
	\item depth$_{G(A)}G(M)=$depth$_{G(A')}G(M')$.
	\item projdim$_AM$=projdim$_{A'}M'$
	
\end{enumerate}
We will use this result in the following two cases:
\begin{enumerate}
	\item We can assume $A$ is complete by taking $B=\hat{A}$.
	\item We can assume the residue field of $A$ is infinite, because if the residue field $(k=A/\mathfrak{m})$ is finite we can take $B=A[X]_S$  where $S=A[x]\setminus \mathfrak{m}A[X]$. Clearly, the residue field of $B=k(X)$ is infinite.
\end{enumerate}

Since all the properties we deal in this article are invariant when we go from $A$ to $A'$, we can assume that residue field of $A$ is infinite.

\s If $a$ is a non-zero element of $M$ and if $i$ is the largest integer such that $a\in \mathfrak{m}^iM$, then we denote image of $a$ in $\mathfrak{m}^i \  M/\mathfrak{m}^{i+1} \ M$ by $a^*$. If $N$ is a submodule of $M$, then $N^*$ denotes the graded submodule of $G(M)$ generated by all $b^*$ with $b\in N$.

\begin{definition}
	Let $(A,\mathfrak{m})$ be a Noetherian local ring and $M\ne 0$ be a finite $A$-module then $M$ is said to be a \CM\ $A$-module if depth $M=$dim $M$, and a maximal \CM\ (MCM) module if depth $M=$dim $A$.
\end{definition}

\s \label{mod-sup} If $x\in \mathfrak{m}\setminus\mathfrak{m}^2$. Set $N=M/xM$ and $K=\mathfrak{m}/(x)$ then we have {\bf Singh's equality}\index{Singh's equality} ( for $M=A$ see \cite[Theorem 1]{singh}, and for the module case see \cite[Theorem 9]{Pu0})
$$H(M,n)=\ell(N/K^{n+1}N)-\ell\left(\frac{\mathfrak{m}^{n+1}M:x}{\mathfrak{m}^nM}\right)\ \text{for all}\ n\geq0.$$

Set $b_n(x,M)=\ell(\mathfrak{m}^{n+1}M:x/\mathfrak{m}^nM)$ and $b_{x,M}(z)=\sum_{n\geq0}b_n(x,M)z^n$. Notice that $b_0(x,M)=0$. 

\s \label{Property} (See \cite[Corollary 10]{Pu0}) Let $x\in \mathfrak{m}$ be an $M-$superficial and regular element. Set $B=A/(x)$, $N=M/xM$ and $K=\mathfrak{m}/(x)$ then we have
\begin{enumerate}
	\item dim$M-1$ = dim$N$ and $h_0(N)=h_0(M)$.
	\item $b_{x,M}$ is a polynomial.
	\item $h_M(z)=h_N(z)-(1-z)^rb_{x,M}(z)$.
	\item $h_1(M)=h_1(N)$ if and only if $\mathfrak{m}^2M\cap xM=x\mathfrak{m}M.$
	\item $e_i(M)=e_i(N)$ for $i=0,\ldots,r-1.$
	\item $e_r(M)=e_r(N)-(-1)^r\sum_{n\geq0}b_n(x,M).$
	\item $x^*$ is $G(M)$-regular if and only if $b_n(x,M)=0$ for all $n\geq0.$
	\item $e_r(M)=e_r(N)$ if and only if $x^*$ is $G(M)$-regular.
	\item depth $G(M)\geq1$ if and only if $h_{M}(z)=h_N(z)$.
\end{enumerate}
\s \label{Sally-des} {\bf Sally-descent }(see \cite[Theorem 8]{Pu0}): Let $(A,\mathfrak{m})$ be a \CM\ local ring of dimension $d$ and $M$ be \CM\ module of dimension $r$. Let $x_1,\ldots,x_c$ be a $M$-superficial sequence with $c\leq r$. Set $N=M/(x_1,\ldots,x_c)M$ then   

depth $G(M)\geq c+1$ if and only if depth $G(N)\geq 1$. 

\s Let $(A,\mathfrak{m})$ be Noetherian local ring, $M$ a finitely generated  $A$-module. Let $J \sub \mathfrak{m}$ be an ideal, then $J$ is said to be a reduction  of $M$ if $\mathfrak{m}^{n+1}M=J\mathfrak{m}^nM$ for some $n\geq 0$. Set red$_J(M)=$ min\{$n | \mathfrak{m}^{n+1}M=J\mathfrak{m}^nM$\}.
\begin{definition}
	A reduction is called minimal reduction if it is minimal with respect to the inclusion.
\end{definition}

\s Assume the residue field of $A$ is infinite. The reduction number of $M$ (red$(M)$) is defined as red$(M)=$ min\{ red$_J(M)| J$ is a minimal reduction of $M$\}.

\begin{definition}\label{Ulrich}
	Let $(A,\mathfrak{m})$ be a Noetherian local ring and $M$ be a maximal \CM \ module  then $M$ is said to be a Ulrich module if $e(M)=\mu(M)$.

\end{definition}

\begin{definition}(\cite[Definition 15]{Pu0})
	Let $(A,\mathfrak{m})$ be a Noetherian local ring and $M$ a \CM\ $A$-module. We say $M$ has minimal multiplicity if $e(M)=h_0(M)+h_1(M)$.
\end{definition}
\begin{remark}\label{minmulM}(\cite[Theorem 16]{Pu0})
	Let $(A,\mathfrak{m})$ be a Noetherian local ring and $M$ a \CM\ $A$-module. If $M$ has minimal multiplicity then $G(M)$ is \CM\ $G(A)$-module. Also, $h_M(z)=h_0(M)+h_1(M)z$.\\
	If $A/\mathfrak{m}$ is infinite then $M$ has minimal multiplicity if and only if red$(M)\leq 1$ (see \cite[Theorem 16]{Pu0}).
\end{remark}

\s\label{dm1hpol}(see \cite[Proposition 13]{Pu0}) Let $(A,\mathfrak{m})$ be a Noetherian local ring and $M$ a \CM\ $A$-module with dim$M=1$. Let $x$ be an $M$-superficial element. Set $\rho_n(M)=\ell(\mathfrak{m}^{n+1}M/x\mathfrak{m}^nM)$ for all $n\geq 0$. If deg$h_M(z)=s$ then \\ $\rho_n(M)=0$ for all $n\geq s$, and $$h_M(z)=h_0(M)+\sum_{i=0}^{s}(\rho_{i-1}(M)-\rho_i(M))z^i.$$

\s\label{e2>} (see \cite[Proposition 3.1]{Rossi}) Let $(A,\mathfrak{m})$ be a Noetherian local ring and $M$ a \CM\ $A$-module then $e_2(M)\geq 0$.

\s  (See \cite[section 6]{heinzer}) For any $n\geq1$ we can define Ratliff-Rush submodule of $M$ associated with $\mathfrak{m}^n$ as 
$$\widetilde{\mathfrak{m}^nM}=\bigcup_{i\geq0}(\mathfrak{m}^{n+i}M:_M\mathfrak{m}^i)$$ The filtration $\{\widetilde{\mathfrak{m}^nM}\}_{n\geq1}$ is known as the Ratliff-Rush filtration \index{Ratliff-Rush filtration} of $M$ with respect to $\mathfrak{m}$.

For the proof of the following properties in the ring case see \cite{Ratliff}. This proof can be easily extended for the modules. Also see \cite[2.2]{Naghipour}.

\s If depth$M>0$ and  $x\in \mathfrak{m}$ is a $M-$superficial element then we have
\begin{enumerate}
	\item $\widetilde{\mathfrak{m}^nM}=\mathfrak{m}^nM$ for all $n\gg0.$
	\item  $(\widetilde{\mathfrak{m}^{n+1}M}:x)=\widetilde{\mathfrak{m}^nM}$ for all $n\geq1.$
\end{enumerate}
\s\label{htilde} If depth$M >0$. Let $\widetilde{G(M)}=\bigoplus_{n\geq0}\widetilde{\mathfrak{m}^nM}/\widetilde{\mathfrak{m}^{n+1}M}$ be the associated graded module  of $M$ with respect to Ratliff-Rush filtration. Then its Hilbert series
$$\sum_{n\geq0}\ell(\widetilde{\mathfrak{m}^nM}/\widetilde{\mathfrak{m}^{n+1}M})z^n=\frac{\widetilde{h_M}(z)}{(1-z)^r}$$
Where $\widetilde{h_M}(z)\in \mathbb{Z}[z]$. Set $r_M(z)=\sum_{n\geq0}\ell(\widetilde{\mathfrak{m}^{n+1}M}/\mathfrak{m}^{n+1}M)z^n$; clearly, $r_M(z)$ is a polynomial with non-negative integer coefficients (because depth$M>0$). Now we have $$h_M(z)=\widetilde{h_M}(z)+(1-z)^{r+1}r_M(z);\ \text{where }\ r=\text{dim}M$$
We know that depth$G(M)>0$ if and only if $r_M(z)=0$.

\begin{definition}\label{hypersurface}
	Let $(A,\mathfrak{m})$ be a Noetherian local ring, then $A$ is said to be a hypersurface ring if its completion can be written as a quotient of a regular local ring by a principal ideal.
\end{definition}
\s Let $(Q,\mathfrak{n})$ be a regular local ring, $f\in \mathfrak{n}^e\setminus\mathfrak{n}^{e+1}$ and $A=Q/(f)$. If $M $ is an MCM $A-$module then projdim$_Q(M)=1$ and $M$ has a minimal presentation: $$0\rt Q^{\mu(M)}\rt Q^{\mu(M)}\rt M \rt 0$$

\s \label{i(M)}Let $(Q,\mathfrak{n})$ be a regular local ring and $\phi : Q^t\rt Q^t$  a linear map,  set
$$i_\phi=\text{max}\{i |\  \text{all entries of}\ \phi \ \text{are in }\ \mathfrak{n}^i\}$$
If $M $ has minimal presentations: $0\rt Q^t\xrightarrow{\phi}Q^t\rt M\rt0$ and
$0\rt Q^t\xrightarrow{\phi'}Q^t\rt M\rt0$ then it is well known that $i_\phi=i_{\phi'}$ and det$(\phi)=u$det$(\phi')$ where $u$ is a unit. We set $i(M)=i_\phi$ and det$M=$det$(\phi)$. For any non-zero element $a$ of $Q$ we set $v_Q(a)=max\{i|a\in \mathfrak{n}^i\}$. We are choosing this set-up from \cite{PuMCM}.

\begin{definition} \label{phi}
	(See \cite[Definition 4.4]{PuMCM}) Let $(Q,\mathfrak{n})$ be a regular local ring, $A=Q/(f)$ where $f\in \mathfrak{n}^e\setminus\mathfrak{n}^{e+1}, e\geq2$ and $M$  an MCM $A-$module with minimal presentation: $$0\rt Q^t\xrightarrow{\phi}Q^t\rt M\rt0$$
	Then an element $x$ of $\mathfrak{n}$ is said to be $\phi-$ superficial \index{$\phi-$ superficial}if we have
	\begin{enumerate}
		\item $x$ is $Q\oplus A\oplus M$ superficial.
		\item If $\phi=(\phi_{ij})$ then $v_Q(\phi_{ij})=v_{Q/xQ}(\overline{\phi_{ij}})$.
		\item $v_Q(det(\phi))=v_{Q/xQ}det(\overline{\phi})$
	\end{enumerate}
\end{definition}
\begin{remark}

	If $x$ is $Q\oplus A\oplus M\oplus (\oplus_{ij}Q/(\phi_{ij}))\oplus Q/(det(\phi))-$superficial then it is $\phi-$superficial. So if the residue field of $Q$ is infinite then $\phi-$superficial elements always exist.
\end{remark}
\begin{definition}
	(See \cite[Definition 4.5]{PuMCM}) Let $(Q,\mathfrak{n})$ be a regular local ring, $A=Q/(f)$ where $f\in \mathfrak{n}^e\setminus\mathfrak{n}^{e+1}, e\geq2$ and $M$  an MCM $A-$module with minimal presentation: $$0\rt Q^t\xrightarrow{\phi}Q^t\rt M\rt0.$$
	We say that $x_1,\ldots,x_c$ is a $\phi$-superficial sequence if $\overline{x_n}$ is $(\phi \otimes_Q Q/(x_1,\ldots,x_{n-1}))$-superficial for $n=1,\ldots,c$.
\end{definition}

\s \label{d=1} With above set-up we have   
\begin{enumerate}
	\item (see \cite[Lemma 4.7]{PuMCM})  If dim$M=1$  then
	$$h_M(z)=\mu(M)(1+z+\ldots+z^{i(M)-1})+\sum_{i\geq i(M)}h_i(M)z^i$$
	$$ \text{with}\ h_i(M)\geq0 \ \forall \ i.$$
	
	\item (see \cite[Theorem 2]{PuMCM}) $e(M)\geq \mu(M)i(M)$ and if $e(M)=\mu(M)i(M)$ then 
	
	$G(M)$ is Cohen-Macaulay and $h_M(z)=\mu(M)(1+z+\ldots+z^{i(M)-1})$.
	
\end{enumerate}

\s\label{e(A)=3} Let $(A,\mathfrak{m})$ be a complete hypersurface ring with infinite residue field and dimension $d$. If  $e(A)=3$,   we can write $(A,\mathfrak{m})=(Q/(g),\mathfrak{n}/(g))$ where $(Q,\mathfrak{n})$ is a regular local ring of dimension $d+1$ and $g\in \mathfrak{n}^3\setminus\mathfrak{n}^4$. This implies that $h_A(z)=1+z+z^2$. Now for any maximal $A$-superficial sequence $\underline{x}=x_1,\ldots,x_d$ we have $\mathfrak{m}^3=(\underline{x})\mathfrak{m}^2$. Let $M$ be an MCM $A$-module and $\underline{y}=y_1,\ldots,y_d$ be any maximal $A\oplus M$-superficial sequence. Then    $\mathfrak{m}^3M=(\underline{y})\mathfrak{m}^2M$. In particular, for any maximal $\phi-$superficial sequence $\underline{y}$ we have red$_{(\underline{y})}(M)\leq 2$. So red$(M)\leq 2$.

\s \label{exact seq}Let $(A,\mathfrak{m})$ be a \CM\ local ring  and $M$ a \CM\ $A$-module of dimension 2. Let $x,y$ be a maximal $M$-superficial sequence. 

Set $J=(x,y)$ and $\overline{M}=M/xM$ then we have exact sequence (for $M=A$ see \cite[Lemma 2.2]{rv})
\begin{align*}
0 \rt \mathfrak{m}^nM:J/\mathfrak{m}^{n-1}M\xrightarrow{f_1} \mathfrak{m}^nM:x/\mathfrak{m}^{n-1}M & \xrightarrow{f_2} \mathfrak{m}^{n+1}M:x/\mathfrak{m}^{n}M\\
\xrightarrow{f_3} \mathfrak{m}^{n+1}M/J\mathfrak{m}^nM
&\xrightarrow{f_4} \mathfrak{m}^{n+1}\overline{M}/y\mathfrak{m}^n\overline{M}\rt 0
\end{align*}
Here, $f_1$ is inclusion map, $f_2(a+\mathfrak{m}^{n-1}M)=ay+\mathfrak{m}^nM, f_3(b+\mathfrak{m}^nM)=bx+J\mathfrak{m}^nM$ and $f_4$ is reduction modulo $x$.

\s \label{exact d}Let $(A,\mathfrak{m})$ be a \CM\ local ring of dimension $d\geq 1$ and $M$ a maximal \CM\ $A$-module. Let $\underline{x}=x_1,\ldots,x_d$ be a maximal $M$-superficial sequence. Set $N=M/x_1M$, $J=(x_1,\ldots,x_d)$ and $\overline{J}$ is image of $J$ is $A/(x_1)$. Then we have  
$$0 \rt \mathfrak{m}^{2}M:x_1/\mathfrak{m}M
\xrightarrow{f} \mathfrak{m}^{2}M/J\mathfrak{m}M
\xrightarrow{g} \mathfrak{m}^{2}{N}/\overline{J}\mathfrak{m}{N}\rt 0.$$
Here, $f(a+\mathfrak{m}M)=ax_1+J\mathfrak{m}M$ and $g$ is reduction modulo $x_1$.

\s\label{exact1} Let $(A,\mathfrak{m})$ be a \CM\ local ring of dimension one and $M$ a maximal \CM\ $A$-module. Let $x$ be a superficial element of $M$. Set $N=M/xM$. Then we have $$0\rt \mathfrak{m}^2M:x/\mathfrak{m}^2M\xrightarrow{f} \mathfrak{m}^2M/x\mathfrak{m}^2M\xrightarrow{g}\mathfrak{m}^2N/0\rt 0.$$
Here, $f(a+\mathfrak{m}^2M)=ax+x\mathfrak{m}^2M$ and $g$ is reduction modulo $x$.

The following  result is well known, but we will use this many times. For the convenience of the reader we state it

\s \label{overline{G(M)}} Let $(Q,\mathfrak{n},k)$ be a regular local ring of dimension $d+1$ and\\ $(A,\mathfrak{m})=(Q/(f),\mathfrak{n}/(f))$ with $f\in \mathfrak{n}^3\setminus\mathfrak{n}^{4}$. Now if $M$ is a maximal \CM\ $A$-module. Let $\underline{x}=x_1,\ldots,x_d$ be sufficiently general linear forms in $\mathfrak{n}/\mathfrak{n}^2$. Then $\underline{x}$ is $A\oplus Q\oplus M-$superficial sequence and red$_{(\underline{x})}(A)= 2$. So red$_{(\underline{x})}(M)\leq 2$. Set $S=G_{\mathfrak{n}}(Q)$, $R=S/(\underline{x^*})S$ then $R\cong k[T]$ and $G(A)/(\underline{x})G(A)\cong R/(T^s)$ for some $s\geq2$.\\ Now consider $\overline{G(M)}=G(M)/(\underline{x^*})G(M)$. Then
$$\overline{G(M)}=M/\mathfrak{m}M \oplus \mathfrak{m}M/(\mathfrak{m}^2M+(\underline{x})M) \oplus \mathfrak{m}^2M/(\mathfrak{m}^3M+(\underline{x})\mathfrak{m}M)$$
Its Hilbert series is $\mu(M)+\alpha z+\beta z^2$ where $\beta\leq \alpha\leq \mu(M)$, because it is an $R$-module which is also $R/(T^s)$-module and it is generated in degree zero.

\s \label{RR-2} Let $(A,\mathfrak{m})$ be a \CM\ local ring of dimension $d$ and $M$ be a finite $A$-module with depth$M\geq 2$. Let $x$ be an $M$-superficial element. Set $N=M/xM$. Then for $n\geq 0$ we have exact sequence  (see \cite[2.2]{Pu2}) 
$$0\rt \frac{(\mathfrak{m}^{n+1}M:x)}{\mathfrak{m}^nM}\rt \frac{\widetilde{\mathfrak{m}^nM}}{\mathfrak{m}^nM}\rt \frac{\widetilde{\mathfrak{m}^{n+1}M}}{\mathfrak{m}^{n+1}M} \rt \frac{\widetilde{\mathfrak{m}^{n+1}N}}{\mathfrak{m}^{n+1}N}. $$

In particular, we have exact sequence
$$0\rt \widetilde{\mathfrak{m}M}/\mathfrak{m}M\rt \widetilde{\mathfrak{m}N}/\mathfrak{m}N.$$

If depth$M=1$, then for all $n\geq 0$ we have following exact sequence 
$$0\rt \frac{(\mathfrak{m}^{n+1}M:x)}{\mathfrak{m}^nM}\rt \frac{\widetilde{\mathfrak{m}^nM}}{\mathfrak{m}^nM}\rt \frac{\widetilde{\mathfrak{m}^{n+1}M}}{\mathfrak{m}^{n+1}M}$$

{\bf Convention}:
Let $M$ be a maximal Cohen-Macaulay  module of dimension $d$ and $\underline{x}=x_1,\ldots,x_d$ be a maximal $\phi$-superficial sequence, then

$M_0=M$ and $M_t=M/(x_1,\ldots,x_t)M$ for $t=1,\ldots,d$. 

The following result is well-known.

\begin{lemma}\label{Md no free}
	Let $(A,\mathfrak{m})$ be a complete hypersurface ring of dimension $d$ with infinite residue field and multiplicity $e(A)=e$. Let  $M$ be a MCM module. Let $\underline{x}=x_1,\ldots,x_d$ be a maximal $A$-superficial sequence. If $M$ has no free summand, then $M_d=M/(x_1,\ldots,x_d)M$ also has no free summand.
\end{lemma}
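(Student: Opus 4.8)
The plan is to prove the stronger statement that $M$ has a free summand \emph{if and only if} $M_d$ has one; the forward direction is immediate, since a splitting $M\cong A\oplus N$ descends to $M_d\cong A_d\oplus N_d$ with $A_d=A/(\underline x)A$, so the real content (the converse) and the easy direction both drop out of a single base-change computation. The organizing tool is the trace ideal $\tau(M)=\image\big(\Hom_A(M,A)\otimes_A M\to A\big)$, the image of the evaluation map. Because $A$ is local, $M$ has $A$ as a direct summand iff there is a surjection $M\twoheadrightarrow A$ iff $\tau(M)=A$ iff $\tau(M)\not\subseteq\mathfrak{m}$: if $1=\sum_i\alpha_i(m_i)\in\tau(M)$ then some $\alpha_i(m_i)$ is a unit (localness), so $\alpha_i$ is a split surjection, and conversely a projection onto a free summand witnesses $1\in\tau(M)$. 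Thus the whole lemma reduces to the identity $\tau(M_d)=\tau(M)A_d$, for then $M_d$ has a free summand iff $\tau(M)A_d=A_d$ iff $\tau(M)+(\underline x)=A$ iff $\tau(M)\not\subseteq\mathfrak{m}$ (using $(\underline x)\subseteq\mathfrak{m}$), i.e. iff $M$ itself has a free summand.

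To establish $\tau(M_d)=\tau(M)A_d$ I would peel off one superficial element at a time and induct on $d$, the case $d=0$ being trivial. Write $x=x_1$, $A'=A/xA$, $M'=M/xM$. A maximal $A$-superficial element is a parameter, hence lies outside every prime of $\operatorname{Assh}(A)\supseteq\Ass(M)$ (the containment holding because $M$ is MCM, so unmixed); therefore $x$ is regular on both $A$ and $M$, the sequence $\overline{x_2},\dots,\overline{x_d}$ is again a maximal $A'$-superficial sequence, and $M'$ is MCM over the hypersurface $A'$. The decisive input is that $A$, being a hypersurface, is Gorenstein, so for the MCM module $M$ one has $\Ext^i_A(M,A)=0$ for all $i>0$. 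Applying $\Hom_A(M,-)$ to $0\to A\xrightarrow{x}A\to A'\to 0$ and invoking $\Ext^1_A(M,A)=0$ gives $\Hom_A(M,A)/x\Hom_A(M,A)\cong\Hom_A(M,A')\cong\Hom_{A'}(M',A')$; unwinding the maps, this isomorphism is simply ``reduce a homomorphism modulo $x$,'' and is therefore compatible with evaluation.

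Granting this, base change of the evaluation map along $A\to A'$ identifies $\mathrm{ev}_{M'}$ with $\mathrm{ev}_M\otimes_A A'$, so its image is $\tau(M')=\tau(M)A'$; iterating along $x_1,\dots,x_d$ yields $\tau(M_d)=\tau(M)A_d$, exactly what the trace-ideal criterion requires. The one delicate point—really the only obstacle—is the base-change isomorphism $\Hom_A(M,A)/x\Hom_A(M,A)\cong\Hom_{A'}(M',A')$ together with its compatibility with evaluation: this is precisely where Gorenstein-ness and the vanishing $\Ext^{>0}_A(M,A)=0$ are used, and one must check the vanishing survives each reduction. It does, since $A'$ is again Gorenstein and $M'$ remains MCM, so the inductive hypothesis applies verbatim; everything else is formal.
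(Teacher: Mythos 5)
Your proof is correct, but it follows a genuinely different route from the paper's. The paper's own argument is hypersurface-specific: since $M$ has no free summand, Eisenbud's theorem \cite[Theorem 6.1]{Eisenbud} realizes $M=\operatorname{Syz}_1^A(L)$ for some MCM module $L$, so $M\subseteq \mathfrak{m}F$ for a free module $F$; as $\underline{x}$ is $L$-regular this containment descends to $M_d\subseteq \mathfrak{m}F_d$, whence $\mathfrak{m}^{e-1}M_d\subseteq \mathfrak{m}^{e}F_d=0$ because $\operatorname{red}_{(\underline{x})}(A)=e-1$; since any free summand of $M_d$ would be a copy of $A_d$, on which $\mathfrak{m}^{e-1}$ does not vanish, there is none. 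You instead characterize free summands by the trace ideal ($\tau(M)=A$ if and only if $A$ is a summand, valid over any local ring) and reduce everything to the identity $\tau(M_d)=\tau(M)A_d$, whose only non-formal ingredient is surjectivity of the reduction map $\Hom_A(M,A)\to \Hom_{A'}(M',A')$, i.e.\ $\Ext^1_A(M,A)=0$, supplied by Gorensteinness of $A$ together with $M$ being MCM; your deduction that $\underline{x}$ is regular on $M$ from unmixedness, the compatibility of the base-change isomorphism with evaluation, and the inductive step (with $A'$ again Gorenstein and $M'$ again MCM) are all sound. What your approach buys: it works over an arbitrary Gorenstein local ring, it uses neither the multiplicity nor the superficiality of $\underline{x}$ (only that it is a regular sequence on $A$ and $M$, whereas the paper genuinely needs superficiality to get $\operatorname{red}_{(\underline{x})}(A)=e-1$ and $\mathfrak{m}^{e-1}A_d\neq 0$), and it yields the stronger two-sided statement that $M$ has a free summand if and only if $M_d$ does. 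What the paper's approach buys: granted Eisenbud's theorem, it is a three-line argument that stays entirely within the multiplicity/reduction-number toolkit used everywhere else in the paper, with no homological input beyond the syzygy presentation.
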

\begin{proof}
	Since $M$ has no free summand there exists an MCM module $L$ such that $M=Syz_1^{A}(L)$ ( for instance see \cite[Theorem 6.1]{Eisenbud}). So we have  $0\rt M\rt F \rt L\rt 0$ where $F=A^{\mu(L)}$ and $M\sub \mathfrak{m}F$. Set $F_d=F/(\underline{x})F$. Going modulo $\underline{x}$ we get $M_d\sub \mathfrak{m}F_d$. Note $\underline{x}$ is also an $L-$regular sequence. So we have $\mathfrak{m}^{e-1}M_d\sub \mathfrak{m}^eF_d=0$, because red$(A)=e-1$. This implies $M_d$ has no free summand.
\end{proof}

\section{\bf The case when $e(M)=\mu(M)i(M)+1$}
 From \cite[theorem 2]{PuMCM} we know that for an MCM module over a hypersurface ring $e(M)\geq \mu(M)i(M)$ and if $e(M)= \mu(M)i(M)$ then $G(M)$ is \CM. Here we  consider the next case and prove that:
\begin{theorem}\label{em=mum}
	Let $(Q,\mathfrak{n})$ be a complete regular local ring of dimension $d+1$ with infinite residue field. Let $g\in \mathfrak{n}^i\setminus\mathfrak{n}^{i+1}$ with $i\geq 2$. Let $(A,\mathfrak{m})=(Q/(g),\mathfrak{n}/(g))$  and $M$ be a MCM $A$-module. Now  if $e(M)=\mu(M)i(M)+1$ then depth$G(M)\geq d-1$ and $h_M(z)=\mu(M)(1+z+\ldots+z^{i(M)-1})+z^s$ where $s\geq i(M)$. Furthermore, $G(M)$ is \CM\ if and only if $s=i(M)$.  
\end{theorem}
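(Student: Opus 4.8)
The plan is to bootstrap everything from dimension one by superficial reductions, and to obtain the depth bound from Sally-descent after isolating a two-dimensional core case. Throughout I write $\mu=\mu(M)$ and $t=i(M)$ (to avoid clashing with the ring order $i=e(A)$).

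First I would record two structural facts. By \ref{Base change} we may assume, as we do, that $A$ is complete with infinite residue field. Tensoring the minimal presentation $0\to Q^{\mu}\xrightarrow{\phi}Q^{\mu}\to M\to 0$ with $A$ yields a minimal cover $A^{\mu}\xrightarrow{\overline{\phi}}A^{\mu}\to M\to 0$ whose kernel $K=\image(\overline{\phi})$ lies in $\mathfrak{m}^{t}A^{\mu}$, since every entry of $\phi$ lies in $\mathfrak{n}^{t}$. Because $K\subseteq \mathfrak{m}^{t}A^{\mu}\subseteq \mathfrak{m}^{k+1}A^{\mu}$ whenever $k\le t-1$, one checks directly that $G(M)_{k}\cong G(A)_{k}^{\mu}$ for $0\le k\le t-1$. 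Moreover $v_Q(\det\phi)\ge \mu t$, while $\det\phi\cdot\det\psi=g^{\mu}$ forces $v_Q(\det\phi)\le \mu\,e(A)$; hence $t\le e(A)$ and therefore $h_k(M)=\mu$ for $0\le k\le t-1$, in every dimension.

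Next I would settle the shape of the $h$-polynomial. Reducing $M$ modulo a maximal $\phi$-superficial sequence produces a one-dimensional MCM module $M_{d-1}$ over a one-dimensional hypersurface ring, with the same $\mu$, $t$ and $e$. By \ref{d=1}, $h_{M_{d-1}}(z)=\mu(1+z+\cdots+z^{t-1})+\sum_{k\ge t}h_k z^k$ with all $h_k\ge 0$; evaluating at $z=1$ and using $e(M)=\mu t+1$ gives $\sum_{k\ge t}h_k=1$, so exactly one coefficient equals $1$. Thus $h_{M_{d-1}}(z)=\mu(1+z+\cdots+z^{t-1})+z^{s}$ for some $s\ge t$. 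For the depth bound I would use \ref{Sally-des}: it is enough, after replacing $M$ by its reduction modulo a $\phi$-superficial sequence of length $d-2$, to show that a two-dimensional module $W$ with the same $\mu$, $t$, $e$ satisfies $\depth G(W)\ge 1$. Put $N=W/xW$ for a superficial $x$; then $N$ is one-dimensional with $h_N(z)=\mu(1+\cdots+z^{t-1})+z^{s}$, and by \ref{Property} the condition $\depth G(W)\ge 1$ is equivalent to $b_{x,W}=0$, i.e.\ to $h_W=h_N$. Writing $h_W(z)=h_N(z)-(1-z)^{2}b_{x,W}(z)$ with $b_{x,W}(z)=\sum_{n\ge1}b_n z^n$ and $b_n\ge 0$, the equalities $h_k(W)=\mu=h_k(N)$ for $k<t$ force, by a short induction, $b_0=\cdots=b_{t-1}=0$.

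To kill $b_n$ for $n\ge t$ I would combine the budget identities $\sum_{k\ge t}h_k(W)=e(W)-\mu t=1$ and $\sum_{k\ge t}k\,h_k(W)=e_1(N)-\mu\binom{t}{2}=s$ (using $e_1(W)=e_1(N)$ from \ref{Property}) with the length exact sequences \ref{exact seq} and \ref{RR-2} comparing $W$, $N$ and $A^{\mu}$, so as to prove $h_k(W)\ge 0$ for all $k$; the budget identities then give $h_W=h_N$, whence $\depth G(W)\ge1$ and so $\depth G(M)\ge d-1$. I expect this last step --- controlling the filtration of $W$ in degrees $\ge t$ against $A^{\mu}$, whose associated graded module is \CM --- to be the main obstacle.

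Finally I would assemble the theorem. Once $\depth G(M)\ge d-1$ is known, iterating the equivalence in \ref{Property} along the $\phi$-superficial sequence yields $h_M=h_{M_1}=\cdots=h_{M_{d-1}}=\mu(1+\cdots+z^{t-1})+z^{s}$, the asserted $h$-polynomial. For the \CM\ criterion, $G(M)$ is generated in degree $0$, so if it were \CM\ its Artinian reduction by a maximal regular sequence of general linear forms would have Hilbert function $h_M$ and be generated in degree $0$ over $k[T]/(T^{e(A)})$; such a Hilbert function is non-increasing. Since $h_M=(\mu,\dots,\mu,0,\dots,0,1)$ has a gap precisely when $s>t$, \CM ness forces $s=t$. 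For the converse I would reduce by Sally-descent to the one-dimensional module $M_{d-1}$ with $s=t$ and show its Ratliff--Rush filtration is trivial via \ref{htilde} and \ref{RR-2}: the single extra generator sits in the top degree $t=\deg h_{M_{d-1}}$, which I expect to leave no intermediate degree in which $\widetilde{\mathfrak{m}^{n}M_{d-1}}$ can strictly exceed $\mathfrak{m}^{n}M_{d-1}$, giving $r_{M_{d-1}}=0$ and hence $\depth G(M)\ge d$.
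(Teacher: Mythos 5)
Your reduction to the two-dimensional case is the right move, and the bookkeeping up to that point is correct: $h_k(W)=\mu$ for $k<i(M)$, the short induction giving $b_0=\cdots=b_{i(M)-1}=0$, and the observation that the two budget identities together with $h_k(W)\ge 0$ for all $k$ would force $h_W=h_N$, hence $\depth G(W)\ge 1$ by \ref{Property}(9). But the non-negativity of the $h$-coefficients of the two-dimensional module $W$ is never proved; you flag it yourself as ``the main obstacle'', and it is in fact the entire content of the theorem in dimension two. Indeed, by \ref{Property}(9) the conclusion $\depth G(W)\ge 1$ conversely implies $h_W=h_N$, which has non-negative coefficients, so your unproved claim is essentially equivalent to the statement being proved; none of the tools you list (\ref{exact seq}, \ref{RR-2}, comparison with $A^{\mu}$) is shown to deliver it. Nor can it follow from any general positivity principle: the paper itself exhibits, in Section 5 (Case (3), Subcase (ii)(b)), a two-dimensional MCM module over a hypersurface with $e(A)=3$ whose $h$-polynomial is $3+3z^2-z^3$ --- there $e(M)=\mu(M)i(M)+2$ --- so any proof of non-negativity must exploit the hypothesis $e(M)=\mu(M)i(M)+1$ in an essential way, and your proposal gives no indication how.

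The paper closes exactly this gap by a different mechanism. In dimension two it uses Singh's equality \ref{mod-sup} to get $\mathfrak{m}^{n+1}M:x_1=\mathfrak{m}^nM$ and $\mathfrak{m}^{n+1}M_1:x_2=\mathfrak{m}^nM_1$ for $n\le i(M)-1$, deduces by an element chase the Valabrega--Valla type conditions $\mathfrak{m}^{n+1}M\cap JM=J\mathfrak{m}^nM$ for $n\le i(M)-1$ (where $J=(x_1,x_2)$), computes $v_{i(M)-1}=\ell\big(\mathfrak{m}^{i(M)}M/J\mathfrak{m}^{i(M)-1}M\big)=\rho_{i(M)-1}(M_1)=1$ using \ref{exact seq} and \ref{dm1hpol}, and then invokes \cite[Theorem 4.4]{Rossi} (with $p=i(M)-1$) to conclude $\depth G(M)\ge 1$. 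That citation is the ingredient missing from your argument; with it, the rest of your plan (Sally-descent, the shape of $h_M$, and the \CM\ criterion --- where your Ratliff--Rush detour for the converse can be replaced by \ref{Property}(9) applied to the one-dimensional reduction, as the paper does) goes through.
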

\begin{proof}
	If dim$M=0$, then $M\cong Q/(y^{i(M)})\oplus\ldots\oplus Q/(y^{i(M)})\oplus Q/(y^{i(M)+1}) $, because $e(M)=\mu(M)i(M)+1$ (see \cite[Remark 4.2]{PuMCM}). This implies $h_{M}(z)=\mu(M)(1+z+\ldots+z^{i(M)-1})+z^{i(M)}$.\\
	If dim$M=1$,  then from \ref{d=1}(1) we have
	$h_{M}(z)=\mu(M)(1+z+\ldots+z^{i(M)-1})+z^{s}$ for $s\geq i(M)$, because $e(M)=i(M)\mu(M)+1$. Let $x_1$ be a $\phi$-superficial element. Set $M_1=M/x_1M$. We know that $e(M)=e(M_1)$, $\mu(M)=\mu(M_1)$ and $i(M)=i(M_1)$. So from the dimension zero case $h_{M_1}(z)=\mu(M)(1+z+\ldots+z^{i(M)-1})+z^{i(M)}$. Therefore $G(M)$ is \CM \ if and only if $h_{M_1}(z)=h_M(z)$ (see \ref{Property}) if and only if $s=i(M)$. \\ 
	If dim$M=2$.
	Let   $\underline{x}=x_1,x_2$ be a maximal $\phi$-superficial sequence (see \ref{phi}). Set $M_1=M/x_1M$, $M_2=M/\underline{x}M$, $J=(x_1,x_2)$ and $(Q',(y))=(Q/(\underline{x}),\mathfrak{n}/(\underline{x}))$.
	Clearly, $Q'$ is a DVR. Notice that since $\underline{x}$ is $\phi$-superficial sequence, $e(M)=e(M_1)=e(M_2),\mu(M)=\mu(M_1)=\mu(M_2)$ and $i(M)=i(M_1)=i(M_2)$. So $M_2\cong  Q'/(y^{i(M)})\oplus\ldots\oplus Q'/(y^{i(M)})\oplus Q'/(y^{i(M)+1}) $, because $e(M)=\mu(M)i(M)+1$. This implies $h_{M_2}(z)=\mu(M)(1+z+\ldots+z^{i(M)-1})+z^{i(M)}$. 
	
	Since dim$M_1=1$, $e(M)=e(M_1)$, $\mu(M)=\mu(M_1)$ and $i(M)=i(M_1)$, we get $e(M_1)=\mu(M_1)i(M_1)+1$. So from dimension one case,  $h-$polynomial of $M_1$ is
	$h_{M_1}(z)=\mu(M)(1+z+\ldots+z^{i(M)-1})+z^{s}$ for $s\geq i(M)$. \\
	Since $M=coker(\phi)$, for $n\leq i(M)-1$ we get $$\mathfrak{m}^nM/{\mathfrak{m}^{n+1}M}\cong \mathfrak{m}^n(Q)^{\mu(M)}/{\mathfrak{m}^{n+1}(Q)^{\mu(M)}}. $$ 
	Now if $h$-polynomial of $M$ is  $h_M(z)=h_0(M)+h_1(M)z+\ldots+h_t(M)z^t$ then 
	$$\ell(\mathfrak{m}^nM/\mathfrak{m}^{n+1}M)=\binom{n+2}{n}\mu(M)\ \text{for all}\  n\leq i(M)-1.$$
	So for all $ n\leq i(M)-1$ we have 
	\[
	(n+1)h_0(M)+nh_1(M)+\ldots +h_n(M)=\binom{n+2}{n}\mu(M). \tag{$\dagger$}
	\]
	Now since $h_0(M)=\mu(M)$, we get from ($\dagger$) $$h_0(M)=h_1(M)=\ldots=h_{i(M)-1}(M)=\mu(M)$$
	So we have, $h_n(M)=h_n({M_1})$ for all $n\leq i(M)-1$.\\
	From Singh's equality (\ref{mod-sup}) we have $$\mathfrak{m}^{n+1}M:x_1=\mathfrak{m}^nM\ \text{for}\ n=0,\ldots,i(M)-1.$$ So we have
	\begin{equation}\label{d211}
	\mathfrak{m}^{n+1}M\cap x_1M=x_1\mathfrak{m}^nM\ \text{for}\ n=0,\ldots,i(M)-1
	\end{equation}
	Since $h_n(M_1)= h_n(M_2)$ for $n=0,\ldots,i(M)-1$,
	from Singh's equality (\ref{mod-sup})
	\begin{equation}\label{d111}
	\mathfrak{m}^{n+1}{M_1}:x_2=\mathfrak{m}^n{M_1} \ \text{for}\ n=0,\ldots,i(M)-1
	\end{equation}
	Now we have  $\mathfrak{m}^{n+1}M\cap JM=J\mathfrak{m}^nM\ \text{for}\ n=0,\ldots,i(M)-1$.
	In fact, if $\alpha=ax_1+bx_2\in \mathfrak{m}^{n+1}M$. Going modulo $x_1$ we get $\overline{\alpha}=\overline{b}x_2\in \mathfrak{m}^{n+1}{M_1}$. From (\ref{d111}) we have $\overline{b}\in \mathfrak{m}^n{M_1}$. So we can write $b=c+f$, where $c\in \mathfrak{m}^nM$ and $f\in x_1M$. This implies  $\alpha=ax_1+cx_2+fx_2$. Hence $\alpha-cx_2\in \mathfrak{m}^{n+1}M\cap x_1M$. So from (\ref{d211}) we have $\alpha=cx_2+gx_1$ with $c,g\in \mathfrak{m}^nM$. This implies $\alpha\in J\mathfrak{m}^nM$.
	
	So we have
	\begin{equation}\label{VV11}
	vv_i(M)=\ell\Big(\frac{\mathfrak{m}^{i+1}M\cap JM}{J\mathfrak{m}^iM}\Big)=0 \ \text{for}\ i=0,\ldots,i(M)-1
	\end{equation}
	Since $\mathfrak{m}^{i(M)}M:x_1=\mathfrak{m}^{i(M)-1}M$,  we have from \ref{exact seq}
	\begin{equation}\label{v11}
	v_{i(M)-1}=\ell(\mathfrak{m}^{i(M)}M/J\mathfrak{m}^{i(M)-1}M)=\ell(\mathfrak{m}^{i(M)}{M_1}/x_2\mathfrak{m}^{i(M)-1}{M_1})=\rho_{i(M_1)-1}= 1
	\end{equation}
	Notice that last  equality in  \ref{v11} is clear from the $h$-polynomial of $M_1$. In fact, from the expression of $h_{M_1}$ and \ref{dm1hpol} we get $\rho_{i(M_1)-1}(M_1)=\rho_{i(M_1)}(M_1)=\ldots=\rho_{s-1}(M_1)=1$. Here $\rho_n(M_1)=\ell(\mathfrak{m}^{n+1}M_1/x_2\mathfrak{m}^nM_1)$ for all $n\geq 0$. \\
	Now from conditions (\ref{VV11}) and (\ref{v11}), depth$G(M)\geq 1$ (see \cite[Theorem 4.4]{Rossi}, take $p=i(M)-1$). So $h_M(z)=h_{M_1}(z)$ (see \ref{Property}). Also, $G(M)$ is \CM \ if and only if $s=i(M)$. \\
	Now assume dim$M\geq3$ and $\underline{x}=x_1,\ldots,x_d$ a maximal $\phi$-superficial sequence. Set $M_{d-2}=M/(x_1,\ldots,x_{d-2})M$. So, depth$G(M_{d-2})\geq 1$.\\
	By Sally-descent  we get depth$G(M)\geq d-1$ and $h_M(z)=\mu(M)(1+z+\ldots+z^{i(M)-1})+z^s$ where $s\geq i(M)$. Note that $G(M)$ is \CM\ if and only if $s=i(M)$.
\end{proof}

We know (from \cite[Theorem 2]{PuMCM}) that if $\mu(M)=r$ and $det(\phi) \in \mathfrak{n}^r\setminus\mathfrak{n}^{r+1}$, then $M$ is an Ulrich module. This implies $G(M)$ is \CM. Here we consider the case when $det(\phi)\in \mathfrak{n}^{r+1}\setminus \mathfrak{n}^{r+2}$. For $a\ne 0$, set  $v_Q(a)=max\{i|a\in \mathfrak{n}^i\}$.

\begin{corollary}\label{mu(r)}
	Let $({Q},\mathfrak{n})$ be a complete regular local ring with infinite residue field of dimension $d+1$ with $d\geq 0$. Let $M$ be a $Q$-module with minimal presentation $$0\rt Q^r\xrightarrow{\phi} Q^r \rt M \rt 0$$
	
	Now if $\phi = [a_{ij}]
	$
	where $a_{ij} \in \mathfrak{n}$ with  $f=det(\phi) \in \mathfrak{n}^{r+1}\setminus \mathfrak{n}^{r+2}$, then depth$G(M)\geq d-1$. In this case if    $red(M)\leq2$   we can also prove that
	\begin{enumerate}
		\item $G(M)$ is \CM \  if and only if $h_M(z)=r+z$.
		\item depth$G(M)=d-1$ if and only if $h_M(z)=r+z^2$
	\end{enumerate}
\end{corollary}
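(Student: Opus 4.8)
The plan is to deduce everything from Theorem~\ref{em=mum} once I check that the present hypotheses force $i(M)=1$ and $e(M)=\mu(M)i(M)+1$. Since $\phi$ is a minimal $r\times r$ presentation, $\mu(M)=r$, and as $f=\det(\phi)$ annihilates $M=\operatorname{coker}(\phi)$ by Cramer's rule, $M$ is an MCM module over the hypersurface $A=Q/(f)$ with $\dim A=d$. If $r=1$ then $M$ is cyclic and $G(M)$ is \CM, so I assume $r\ge 2$. All entries $a_{ij}$ lie in $\mathfrak{n}$, so $i(M)\ge 1$ by \ref{i(M)}; and if $i(M)\ge 2$ then every entry lies in $\mathfrak{n}^2$, forcing $\det(\phi)\in\mathfrak{n}^{2r}\subseteq\mathfrak{n}^{r+2}$ (as $r\ge 2$), contradicting $f\notin\mathfrak{n}^{r+2}$. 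Hence $i(M)=1$.

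Next I would pin down $e(M)$ by going to dimension zero. Pick a maximal $\phi$-superficial sequence $\underline{x}=x_1,\dots,x_d$ (see \ref{phi}) and set $Q'=Q/(\underline{x})$ and $M_d=M/\underline{x}M$; then $Q'$ is a DVR with maximal ideal $(y)$ and $M_d=\operatorname{coker}(\phi\otimes_Q Q')$. Over a DVR the cokernel of an injective $r\times r$ matrix has finite length equal to the valuation of its determinant, so using the defining property of $\phi$-superficiality, $e(M)=e(M_d)=\ell(M_d)=v_{Q'}(\det(\phi\otimes Q'))=v_Q(\det\phi)=r+1$. Thus $e(M)=r+1=\mu(M)\cdot 1+1=\mu(M)i(M)+1$, and Theorem~\ref{em=mum} applies. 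It gives $\depth G(M)\ge d-1$ and, since $i(M)=1$ collapses the sum $1+z+\cdots+z^{i(M)-1}$ to $1$, it yields $h_M(z)=r+z^s$ for some $s\ge 1$ with $G(M)$ \CM\ if and only if $s=1$. This is exactly the main assertion together with part~(1): $G(M)$ is \CM\ precisely when $h_M(z)=r+z$ (note this half needs no bound on $\red(M)$).

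The remaining point --- and the only place $\red(M)\le 2$ enters --- is to show $s\le 2$; this is the main obstacle. The quickest route is the general inequality $\deg h_M(z)\le \red(M)$ for \CM\ modules, giving $s\le 2$ at once. To stay self-contained I would instead cut down to dimension one: take a maximal $\phi$-superficial sequence of length $d-1$ (the case $d=0$ being immediate, with $h_M(z)=r+z$), obtaining an MCM module $M_{d-1}$ of dimension one. Because $\depth G(M)\ge d-1$, Sally-descent (\ref{Sally-des}) gives $\depth G(M_c)\ge 1$ for $c=0,\dots,d-2$, so by \ref{Property}(9) the $h$-polynomial is unchanged along the reduction and $h_{M_{d-1}}(z)=r+z^s$. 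Reading the quantities $\rho_n$ of \ref{dm1hpol} off this polynomial yields $\rho_{s-1}=1$ and $\rho_n=0$ for $n\ge s$, whence $\red_{(x)}(M_{d-1})=s$; since the reduction number does not increase under superficial reduction, $s=\red(M_{d-1})\le\red(M)\le 2$.

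Therefore $s\in\{1,2\}$, and part~(2) follows: if $s=2$ then $G(M)$ is not \CM\ by Theorem~\ref{em=mum}, and as $\depth G(M)\ge d-1$ this forces $\depth G(M)=d-1$; conversely $\depth G(M)=d-1$ excludes $s=1$, so $s=2$ and $h_M(z)=r+z^2$. I expect the bookkeeping of Step~2 (the exact value of $e(M)$) and the $h$-polynomial/reduction-number comparison in the last two paragraphs to carry the genuine content, with everything else being a direct invocation of Theorem~\ref{em=mum}.
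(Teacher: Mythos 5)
Your proposal is correct and follows essentially the same route as the paper's own proof: reduce modulo a maximal $\phi$-superficial sequence to the DVR $Q'$, where $\det(\phi)\in\mathfrak{n}^{r+1}\setminus\mathfrak{n}^{r+2}$ forces $M_d\cong Q'/(y)\oplus\ldots\oplus Q'/(y)\oplus Q'/(y^2)$, hence $e(M)=r+1=\mu(M)i(M)+1$; then invoke Theorem \ref{em=mum} to get $\depth G(M)\geq d-1$ and $h_M(z)=r+z^s$, and finally use $\red(M)\leq 2$ together with \ref{Property}(9) and \ref{dm1hpol} to force $s\in\{1,2\}$, which is exactly parts (1) and (2). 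The only differences are refinements on your side: you treat $r=1$ separately and justify $i(M)=1$ explicitly (the paper's blanket claim ``$i(M)=i(M_d)=1$'' is literally false when $r=1$, though the conclusion survives via $e(M)=\mu(M)i(M)$ there), and your closing assertion that ``the reduction number does not increase under superficial reduction'' is the same unproved leap the paper makes in its one-line deduction $s\leq 2$ --- in both cases it is legitimate here only because $\depth G(M)\geq d-1$ has already been established, which makes $\red_J(M)$ independent of the choice of minimal reduction $J$ (reduction numbers can genuinely depend on $J$ in general), so that $\red(M)$ is indeed computed by the superficial sequence and equals $s$.
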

\begin{proof}
	Set $(A,\mathfrak{m})=(Q/(f),\mathfrak{n}/(f))$. Since $f.M=0$, this implies $M$ is an $A$-module. Also, it is clear that $M$ is an MCM $A$-module because projdim$_QM=1$.\\
	If dim$M=0$. Since $v_Q(det(\phi))=r+1 $ and $\phi$ is an $r\times r$-matrix. So we get $M\cong  Q/(y)\oplus\ldots\oplus Q/(y)\oplus Q/(y^2) $.
	This implies $h_{M}(z)=r+z$, where $r=\mu(M).$\\
	If dim$M\geq 1$. Let $x_1,\ldots,x_d$ be a maximal $\phi$-superficial sequence. Set $M_{d-1}=M/(x_1,\ldots,x_{d-1})M$,  $M_d=M/(x_1,\ldots,x_d)M$ and
	$(Q',(y))=(Q/(\underline{x}),\mathfrak{n}/(\underline{x}))$.\\
	Clearly, $Q'$ is a DVR.\\
	We know that $v_{Q'}(det(\phi\otimes Q')) = v_Q(det(\phi))=r+1 $ and $\phi$ is an $r\times r$-matrix. So $M_d\cong  Q'/(y)\oplus\ldots\oplus Q'/(y)\oplus Q'/(y^2) $.
	This implies $h_{M_d}(z)=r+z$, where $r=\mu(M).$\\
	In this case $i(M)=i(M_d)=1$, $e(M)=e(M_d)=r+1$ and $\mu(M)=\mu(M_d)=r$. 
	So we have, $e(M)=\mu(M)i(M)+1$. Now from the Theorem \ref{em=mum}, we get depth$G(M)\geq d-1$. This implies $h_{M_{d-1}}(z)=h_M(z)=r+z^s$ with $s\geq 1$ (for the first equality see \ref{Property}). Since red$M\leq 2$, we get $s$ = deg$h_M(z)\leq 2$ (see \ref{dm1hpol}). So here we have two cases.\\ First case when $s=1$. In this case $h_M(z)=r+z$ and $G(M)$ is \CM.\\
	Second case when $s=2$. In this case $h_M(z)=r+z^2$ and depth$G(M)=d-1$. 
\end{proof}

\section{\bf The case when $\mu(M)=2$ }
In this section we prove Theorem \ref{1}. We first consider the case when $M$ has no free summand and prove:
\begin{theorem}\label{muM=2}
	Let $(A,\mathfrak{m})$ be a complete hypersurface ring  of dimension $d$ with $e(A)=3$ and infinite residue field. Let  $M$ be  an MCM $A$-module with no free summand. Now if $\mu(M)=2$, 
	then depth$G(M)\geq d-1$.
	
\end{theorem}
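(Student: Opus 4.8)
The plan is to reduce to the Artinian case by cutting down with a $\phi$-superficial sequence and then to read off the isomorphism type of the reduced module, which is forced into one of only three shapes. By \ref{e(A)=3} we may write $(A,\m)=(Q/(g),\n/(g))$ with $(Q,\n)$ regular local of dimension $d+1$ and $g\in\n^3\setminus\n^4$, and $M$ has a minimal presentation $0\rt Q^2\xrightarrow{\phi}Q^2\rt M\rt 0$. I would choose a maximal $\phi$-superficial sequence $\underline{x}=x_1,\dots,x_d$ (these exist since the residue field is infinite) and set $M_d=M/\underline{x}M$ and $Q'=Q/(\underline{x})$. Then $Q'$ is a DVR with uniformizer $y$, and by \ref{phi} the sequence preserves all the relevant invariants: $\mu(M)=\mu(M_d)=2$, $e(M)=e(M_d)$, $i(M)=i(M_d)$, and $v_{Q'}(\bar g)=v_Q(g)=3$, so that $M_d$ is a module over $A_d=Q'/(\bar g)\cong k[y]/(y^3)$.

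Next I would classify $M_d$. Since $Q'$ is a PID and $M_d$ is a torsion module killed by $y^3$ with $\mu(M_d)=2$, the structure theorem gives $M_d\cong Q'/(y^a)\oplus Q'/(y^b)$ with $1\le a\le b$. The place where the hypothesis \emph{no free summand} is essential is exactly here: by Lemma \ref{Md no free} the module $M_d$ has no free summand, and since a free $A_d$-summand is $A_d\cong Q'/(y^3)$, this rules out $a$ or $b$ equal to $3$ and forces $a,b\in\{1,2\}$. Reading the invariants off the minimal presentation $\operatorname{diag}(y^a,y^b)$ (legitimate since $i(\cdot)$ is presentation-independent by \ref{i(M)}) gives $e(M)=e(M_d)=a+b$ and $i(M)=i(M_d)=\min(a,b)=a$. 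Hence only three possibilities occur: $(a,b)=(1,1)$ with $e(M)=2$, $i(M)=1$; $(a,b)=(1,2)$ with $e(M)=3$, $i(M)=1$; and $(a,b)=(2,2)$ with $e(M)=4$, $i(M)=2$.

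Finally I would dispatch each case using a result already available. When $(a,b)=(1,1)$ one has $e(M)=\mu(M)i(M)=2$, and when $(a,b)=(2,2)$ one has $e(M)=\mu(M)i(M)=4$; in both cases \ref{d=1}(2) gives that $G(M)$ is \CM, so $\depth G(M)=d\ge d-1$. When $(a,b)=(1,2)$ one has $e(M)=\mu(M)i(M)+1=3$, so Theorem \ref{em=mum} applies directly and yields $\depth G(M)\ge d-1$. This exhausts all cases and proves the theorem.

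The genuine content, and the only place where care is needed, is the bookkeeping that makes the reduction legitimate: one must know that the $\phi$-superficial sequence simultaneously preserves $e$, $\mu$, and $i$ (so that the numerical relation $e=\mu i$ or $e=\mu i+1$ can be verified on $M_d$) and that \emph{no free summand} survives passage to $M_d$. Both facts are supplied by \ref{phi} and Lemma \ref{Md no free}; granting them, the statement is a short case analysis rather than a fresh Hilbert-function computation. In particular I do not expect to need the dimension-two exact-sequence and Rossi-criterion machinery used inside the proof of Theorem \ref{em=mum}, since that work is already packaged in the $e(M)=\mu(M)i(M)+1$ case that I am citing.
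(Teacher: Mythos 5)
Your proposal is correct and follows essentially the same route as the paper: reduce by a maximal $\phi$-superficial sequence, use Lemma \ref{Md no free} to force $M_d\cong Q'/(y^{a})\oplus Q'/(y^{b})$ with $a,b\in\{1,2\}$, and settle the cases $e(M)=\mu(M)i(M)$ and $e(M)=\mu(M)i(M)+1$ by the results already established. The only cosmetic difference is in the case $(a,b)=(1,2)$, where the paper invokes Corollary \ref{mu(r)} (the determinant formulation) while you apply Theorem \ref{em=mum} directly; since Corollary \ref{mu(r)} is itself deduced from Theorem \ref{em=mum} via exactly the identity $e(M)=\mu(M)i(M)+1$ that you verify, the two arguments coincide.
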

\begin{proof} From \ref{e(A)=3} we can take
	 $(A,\mathfrak{m})=(Q/(f),\mathfrak{n}/(f))$, where
	$(Q,\mathfrak{n})$ is a regular local ring of dimension $d+1$ and $f\in \mathfrak{n}^3\setminus\mathfrak{n}^{4}$.\\
	Let dim$M\geq1$ and $0\rt Q^2\xrightarrow{\phi} Q^2\rt M\rt 0$ be a minimal presentation of $M$. Let $\underline{x}=x_1,\ldots,x_d$ be a maximal $\phi$-superficial sequence (see \ref{phi}). Set $M_d=M/\underline{x}M$ and $(Q',(y))=(Q/(\underline{x}),\mathfrak{n}/(\underline{x}))$.\\
	Clearly, $Q'$ is DVR and so $M_d\cong Q'/(y^{a_1})\oplus Q'/(y^{a_2})$. As $red_{(\underline{x})}(M)\leq2$ and $M_d$ has no free summand (see Lemma \ref{Md no free}), we can assume $1\leq a_1\leq a_2\leq 2$. We consider all possibilities separately 
	
	{\bf Case (1):} $a_1=a_2=1 $. \\
	In this case  $M_d\cong Q'/(y)\oplus Q'/(y)$. This implies $h_{M_d}(z)=2$ and 
	$e(M_d)=\mu(M_d)=2$. For dim$M\geq 1$ we know that $e(M)=e(M_d)$ and $\mu(M)=\mu(M_d)$. So $e(M)=\mu(M)=2$ and this implies  $M$ is Ulrich module. Therefore $G(M)$ is \CM \ and $h_M(z)=2$ (see \cite[Theorem 2]{PuMCM}).
	
	{\bf Case (2):} $a_1=1,a_2=2 $. \\
	In this case $M_d\cong Q'/(y)\oplus Q'/(y^2)$, so $h_{M_d}(z)=2+z$.\\
	This implies  $v_Q(det(\phi)) =v_{Q'}(det(\phi\otimes Q'))=3$. 
	Now for dim$M\geq 1$, from the Theorem \ref{mu(r)} depth$G(M)\geq d-1$. Also, we have two cases.\\ First case when  $h_M(z)=2+z$. In this case $G(M)$ is \CM.\\
	Second case when $h_M(z)=2+z^2$. In this case depth$G(M)=d-1$.

	{\bf Case(3):} $a_1=2,a_2=2$.\\
	In this case  $M_d\cong Q'/(y^2)\oplus Q'/(y^2)$. So $h_{M_d}(z)=2+2z$. This implies $e(M_d)=4=i(M_d)\mu(M_d)$.   Notice this equality is preserved modulo any $\phi$-superficial sequence, So for dim$M\geq1$,  $G(M)$ is \CM \ and $h_M(z)=2+2z$ (see \cite[Theorem 2]{PuMCM}). 
	\end{proof}

\begin{theorem}\label{mu2frsum}
	Let $(A,\mathfrak{m})$ be a complete hypersurface ring  of dimension $d$ with $e(A)=3$ and infinite residue field. Let  $M$ be  an MCM $A$-module with  free summand. Now if $\mu(M)=2$, 	then $G(M)$ is \CM.
	\end{theorem}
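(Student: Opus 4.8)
The plan is to reduce the statement, via the free-summand hypothesis, to modules whose associated graded module is already known to be \CM, using that $G(-)$ commutes with direct sums. Since $A$ is complete, the Krull--Schmidt theorem lets me write $M\cong A^{k}\oplus N$ with $N$ having no free summand; the hypothesis that $M$ has a free summand means $k\geq 1$. Comparing minimal generators gives $2=\mu(M)=k+\mu(N)$, so either $k=2$ with $N=0$, or $k=1$ with $\mu(N)=1$. In either case I may write $M\cong A\oplus L$, where $L$ is an MCM $A$-module with $\mu(L)=1$ (take $L=A$ when $k=2$ and $L=N$ when $k=1$).

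The elementary observation driving the proof is that $\mathfrak{m}^{n}(A\oplus L)=\mathfrak{m}^{n}\oplus\mathfrak{m}^{n}L$ for all $n$, whence
$$G(M)=G(A\oplus L)\cong G(A)\oplus G(L)$$
as graded $G(A)$-modules. Now I invoke two facts already in place. First, $A$ is a hypersurface with $\operatorname{red}(A)=2$ (see \ref{e(A)=3}), so $G(A)$ is \CM. Second, since $\mu(L)=1$, the module $L$ has a minimal presentation $0\rt Q\xrightarrow{a}Q\rt L\rt 0$ with $a\in\mathfrak{n}$, hence $L\cong Q/(a)Q$ and $G(L)$ is \CM\ (this is exactly the remark on one-generated MCM modules). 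Both $G(A)$ and $G(L)$ have dimension $d$.

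To finish, I use that the depth of a finite direct sum of finitely generated graded modules equals the minimum of the depths of the summands. Therefore
$$\depth G(M)=\min\{\depth G(A),\ \depth G(L)\}=\min\{d,d\}=d=\dim G(M),$$
so $G(M)$ is \CM.

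I do not expect any real obstacle: the whole argument is the reduction to a direct sum each of whose summands was handled previously. The only two points needing a line of justification---that $G(-)$ commutes with direct sums (immediate from $\mathfrak{m}^{n}(A\oplus L)=\mathfrak{m}^{n}\oplus\mathfrak{m}^{n}L$) and that depth of a direct sum is the minimum of the depths---are standard.
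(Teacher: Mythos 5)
Your proposal is correct and follows essentially the same route as the paper: split off the free part (the paper writes $M\cong N\oplus A^s$ and treats $N=0$ separately, while you package both cases as $M\cong A\oplus L$), identify the non-free rank-one summand as $Q/(a)$ so its associated graded module is \CM, and conclude via the fact that the depth of a direct sum of graded modules is the minimum of the depths. The only cosmetic differences are your explicit appeal to Krull--Schmidt and your explicit justification that $G(A)$ is \CM\ via $\operatorname{red}(A)=2$, both of which the paper leaves implicit.
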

\begin{proof}
	Since $M$ has a free summand, we can write $M\cong N\oplus A^s$ for some $s\geq 1$ and $N$ has no free summand. We assume $N\ne 0$, otherwise  $M$ is free and  $G(M)$ is \CM.\\
	Clearly, $N$ is a MCM $A$-module (see \cite[Proposition 1.2.9]{BH}). Notice that red$(N)\leq 2$, because red$(M)\leq 2$. Also $\mu(M)> \mu(N)$, so $\mu(N)=1$.\\ 
	Now $N$ has  a minimal presentation  $0\rt Q\xrightarrow{a} Q\rt N\rt  0$, where $a\in \mathfrak{n}$. 
	This implies $N\cong Q/(a)Q$. So $G(N)$ is \CM.\\
		We know that (see \cite[Proposition 1.2.9]{BH})
	
	depth$G(M)\geq $ min\{depth$G(N)$, depth$G(A)$\}$=$ depth$G(N)$.\\
	This implies $G(M)$ is \CM.
		\end{proof}
	From Theorem \ref{muM=2} and Theorem \ref{mu2frsum} we can conclude:
	\begin{corollary}
		Let $(A,\mathfrak{m})$ be a complete hypersurface ring  of dimension $d$ with $e(A)=3$ and infinite residue field. Let  $M$ be  an MCM $A$-module with  $\mu(M)=2$, 
		then depth$G(M)\geq d-1$.
	\end{corollary}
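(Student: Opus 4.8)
The plan is to reduce the corollary to the two preceding theorems by splitting $M$ according to whether or not it has a free summand. Since $A$ is complete, finitely generated $A$-modules enjoy the Krull-Schmidt property, so I can write $M\cong N\oplus A^s$ with $s\geq 0$ and $N$ an MCM $A$-module having no free summand. This dichotomy is exhaustive: either $s=0$, in which case $M$ itself has no free summand, or $s\geq 1$, in which case $M$ has a free summand. The numerical hypotheses of the corollary ($A$ a complete hypersurface ring of dimension $d$ with $e(A)=3$, infinite residue field, and $\mu(M)=2$) are exactly those required by each of the two theorems, so no extra verification is needed before invoking them.

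In the first case I would apply Theorem \ref{muM=2} directly to obtain depth $G(M)\geq d-1$. In the second case I would apply Theorem \ref{mu2frsum}, which yields that $G(M)$ is \CM, so that depth $G(M)=d$, which is in particular $\geq d-1$. Combining the two cases gives depth $G(M)\geq d-1$ in all situations, as claimed.

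There is essentially no obstacle at the level of the corollary itself: all the genuine content has already been established in Theorems \ref{muM=2} and \ref{mu2frsum}. The only point requiring (minor) care is to confirm that ``has a free summand'' and ``has no free summand'' exhaust all MCM modules with $\mu(M)=2$, which is immediate from Krull-Schmidt over the complete ring $A$. The more delicate descent of the invariants to the summand $N$ (namely $\mu(N)=1$ and red$(N)\leq 2$, forcing $N\cong Q/(a)Q$ with $G(N)$ \CM) is already handled internally within the proof of Theorem \ref{mu2frsum}, so the corollary is a purely formal consequence of assembling the two cases.
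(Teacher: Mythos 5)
Your proposal is correct and matches the paper exactly: the paper derives this corollary by precisely the same case split into ``no free summand'' (Theorem \ref{muM=2}) and ``has a free summand'' (Theorem \ref{mu2frsum}), with no additional argument needed. The only cosmetic difference is that you invoke Krull--Schmidt to justify the dichotomy, which is harmless but unnecessary, since ``has a free summand or not'' is exhaustive by definition.
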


\section{\bf The case when $\mu(M)=3$ }
In this section we prove Theorem \ref{2}. We first consider the case when $M$ has no free summand and prove:
\begin{theorem}\label{muM=3}
	Let $(A,\mathfrak{m})$ be a complete hypersurface ring  of dimension $d$ with $e(A)=3$ and infinite residue field. Let  $M$ be an MCM $A$-module with no free summand. Now if  $\mu(M)=3$, then depth$G(M)\geq d-2$.
	
\end{theorem}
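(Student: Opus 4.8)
The plan is to run the same machine as in Theorem~\ref{muM=2}. By \ref{e(A)=3} I may assume $(A,\mathfrak{m})=(Q/(f),\mathfrak{n}/(f))$ with $(Q,\mathfrak{n})$ regular of dimension $d+1$ and $f\in\mathfrak{n}^3\setminus\mathfrak{n}^4$, and fix a minimal presentation $0\rt Q^3\xrightarrow{\phi}Q^3\rt M\rt 0$. Choose a maximal $\phi$-superficial sequence $\underline{x}=x_1,\ldots,x_d$ and put $(Q',(y))=(Q/(\underline{x}),\mathfrak{n}/(\underline{x}))$, a DVR. Then $M_d=M/\underline{x}M$ is a $Q'$-module, hence $M_d\cong Q'/(y^{a_1})\oplus Q'/(y^{a_2})\oplus Q'/(y^{a_3})$ with $a_1\le a_2\le a_3$. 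Since $\red(M)\le 2$ (see \ref{e(A)=3}) forces $\mathfrak{m}^3M_d=0$ and $M_d$ has no free summand (Lemma~\ref{Md no free}), each $a_j\in\{1,2\}$. Because a $\phi$-superficial sequence preserves the valuations of the entries of $\phi$ and of $\det\phi$, we have $i(M)=i(M_d)=a_1$ and $e(M)=e(M_d)=a_1+a_2+a_3$, so that $e(M)-\mu(M)i(M)=\sum_j(a_j-a_1)$. Thus the four possibilities $(a_1,a_2,a_3)\in\{(1,1,1),(1,1,2),(1,2,2),(2,2,2)\}$ give respectively $e(M)-\mu(M)i(M)=0,1,2,0$.

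Three of these are already settled. For $(1,1,1)$ we get $e(M)=\mu(M)=3$, so $M$ is Ulrich and $G(M)$ is \CM; for $(2,2,2)$ we get $e(M)=6=\mu(M)i(M)$ with $i(M)=2$, so $G(M)$ is \CM\ by \cite[Theorem 2]{PuMCM}; and for $(1,1,2)$ we have $e(M)=\mu(M)i(M)+1$, so Theorem~\ref{em=mum} gives $\depth G(M)\ge d-1$. In all three cases $\depth G(M)\ge d-2$ holds a fortiori.

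The real content is the remaining case $(1,2,2)$, where $h_{M_d}(z)=3+2z$, $e(M)=5$, $\mu(M)=3$, $i(M)=1$, i.e. $e(M)=\mu(M)i(M)+2$; this lies just beyond the reach of Theorem~\ref{em=mum}, and I expect the bound $d-2$ to be sharp here. As the assertion is vacuous for $d\le 2$, I would invoke Sally-descent (\ref{Sally-des}): killing $x_1,\ldots,x_{d-3}$ reduces the claim to proving $\depth G(M')\ge 1$ for the three-dimensional quotient $M'=M/(x_1,\ldots,x_{d-3})M$. To get a foothold I would first record the lower-dimensional Hilbert data: by \ref{d=1}(1) together with $\red(M)\le 2$ and \ref{dm1hpol}, the one-dimensional reduction has $h$-polynomial $3+h_1z+h_2z^2$ with $h_1+h_2=2$ and $h_1,h_2\ge 0$. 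I would then push this upward through the exact sequences \ref{exact1}, \ref{exact d} and \ref{exact seq}, together with Singh's equality (\ref{mod-sup}), to control the numbers $b_n(x,M')$ and the Valabrega--Valla type lengths appearing in \ref{exact seq} and \ref{exact d} in degrees $0,1,2$; establishing that these vanish in the required range yields $\depth G(M')\ge 1$ by the depth-one criterion \cite[Theorem 4.4]{Rossi}. Equivalently, one may show that the Ratliff-Rush correction $r_{M'}(z)$ of \ref{htilde} vanishes by bounding $\ell(\widetilde{\mathfrak{m}^nM'}/\mathfrak{m}^nM')$ through the sequences of \ref{RR-2} and the criterion \ref{Property}(9).

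The main obstacle is exactly this $(1,2,2)$ case. In Theorem~\ref{em=mum} one exploits the rigidity $\mathfrak{m}^nM/\mathfrak{m}^{n+1}M\cong\mathfrak{m}^nQ^{\mu(M)}/\mathfrak{m}^{n+1}Q^{\mu(M)}$ for $n\le i(M)-1$, which pins down the early $h$-coefficients and makes the relevant obstruction lengths vanish for free; but here $i(M)=1$, so this identification holds only in degree $0$ and gives no information in positive degrees. Consequently the vanishing of the degree-$1$ and degree-$2$ obstruction lengths can no longer be read off, and must instead be extracted from the single relation $\mathfrak{m}^3M=(\underline{x})\mathfrak{m}^2M$ and the pinned-down $h$-polynomials of the quotients. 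Ruling out the one extra unit of length that would push $\depth G(M')$ below $1$ -- and hence $\depth G(M)$ below $d-2$ -- is the crux of the argument.
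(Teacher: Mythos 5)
Your reduction to the four diagonal types $(1,1,1)$, $(1,1,2)$, $(1,2,2)$, $(2,2,2)$, and your disposal of the first, second and fourth (Ulrich, Theorem~\ref{em=mum}, and $e(M)=\mu(M)i(M)$ via \cite[Theorem 2]{PuMCM}) coincide exactly with the paper's argument and are correct, as is the Sally-descent reduction of the remaining case to proving $\depth G(M')\geq 1$ for the three-dimensional quotient $M'$. But the case $(1,2,2)$ is only planned, not proved, and the plan as stated would fail. Your strategy is to verify the hypotheses of the depth-one criterion of \cite[Theorem 4.4]{Rossi} by showing that the Valabrega--Valla obstruction lengths vanish in degrees $0,1,2$. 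They need not vanish: writing $\underline{x}=x_1,x_2,x_3$ and $M_i=M'/(x_1,\ldots,x_i)M'$, the fact that $h_{M_3}(z)=3+2z$ gives $\mathfrak{m}^2M'\sub(\underline{x})M'$, so $\ell\bigl(\mathfrak{m}^2M'\cap(\underline{x})M'/(\underline{x})\mathfrak{m}M'\bigr)=\ell\bigl(\mathfrak{m}^2M'/(\underline{x})\mathfrak{m}M'\bigr)$, and the paper must handle a subcase (its Subcase (ii)(b), with $h_{M'}(z)=3+3z^2-z^3$) in which this length equals $2$ and, worse, $\depth G(M_1)=\depth G(M_2)=0$. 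In that situation the Rossi--Valla criterion is inapplicable and Sally-descent gives nothing, since no proper quotient of $M'$ has associated graded module of positive depth; your fallback (showing $r_{M'}(z)=0$ via \ref{RR-2}) is just a restatement of the goal, not a method.

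The missing idea is the tool the paper invokes precisely at this point: the complete intersection approximation theorem \cite[Theorem 5.1]{apprx}, which states that if $\delta=\sum_{n\geq 0}\ell\bigl(\mathfrak{m}^{n+1}M'\cap(\underline{x})M'/(\underline{x})\mathfrak{m}^nM'\bigr)\leq 2$ then $\depth G(M')\geq \dim M'-\delta$. The paper's actual route is: (a) bound $b_1(x_3,M_2)\leq 1$ by a Ratliff--Rush argument (its Claims (1) and (2)); (b) if $b_1(x_3,M_2)=0$ then $M_2$ has minimal multiplicity and Sally-descent gives that $G(M)$ is \CM; (c) otherwise $h_{M_2}(z)=3+z+z^2$, and the identity $e_2(M_1)=e_2(M_2)-\sum_i b_i(x_2,M_1)$ of \ref{Property}, combined with $e_2(M_1)\geq 0$ (\ref{e2>}) and $e_2(M_2)=1$, forces $\sum_i b_i(x_2,M_1)\leq 1$; (d) if that sum is $0$, then $\depth G(M_1)\geq 1$ and Sally-descent finishes with depth $d-1$; (e) in the remaining subcase one computes $\ell\bigl(\mathfrak{m}^2M'/(\underline{x})\mathfrak{m}M'\bigr)=2$, hence $\delta=2$, and \cite[Theorem 5.1]{apprx} yields $\depth G(M')\geq 1$, i.e. $\depth G(M)\geq d-2$. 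Steps (c) and (e) --- the $e_2$-nonnegativity trick and the $\delta$-theorem --- are the two inputs your outline lacks, and without them (or a substitute) the crux you correctly identify cannot be closed.
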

\begin{proof} 
	From \ref{e(A)=3} we can take $(A,\mathfrak{m})=(Q/(f),\mathfrak{n}/(f))$, where
	$(Q,\mathfrak{n})$ is a regular local ring of dimension $d+1$ and $f\in \mathfrak{n}^3\setminus\mathfrak{n}^{4}$.\\
	Let dim$M\geq1$ and $0\rt Q^3\xrightarrow{\phi} Q^3\rt M\rt 0$ be a minimal presentation of $M$. Let $\underline{x}=x_1,\ldots,x_d$ be a maximal $\phi$-superficial sequence (see \ref{phi}). Set $M_d=M/\underline{x}M$ and $(Q',(y))=(Q/(\underline{x}),\mathfrak{n}/(\underline{x}))$.
	
	Clearly, $Q'$ is DVR and so $M_d\cong Q'/(y^{a_1})\oplus Q'/(y^{a_2})\oplus Q'/(y^{a_3})$. As $red_{(\underline{x})}(M)\leq2$ and $M_d$ has no free summand (see Lemma \ref{Md no free}), we can assume that $1\leq a_1\leq a_2\leq a_3\leq 2$. Now we  consider all possibilities separately.

	{\bf Case(1):} $a_1=1,a_2=1, a_3=1$.\\
	In this case $M_d\cong Q'/(y)\oplus Q'/(y)\oplus Q'/(y)$. This gives $h_{M_d}(z)=3$. So
	$e(M_d)=\mu(M_d)=3$.  For  dim$M\geq1$ we know that $e(M)=e(M_d)$ and $\mu(M)=\mu(M_d)$. Also, notice that $i(M)=i(M_d)=1$. So we have $e(M)=\mu(M)=3$ this implies that $M$ is an  Ulrich module.  So $G(M)$ is \CM \ and $h_M(z)=3$.  (see \cite[Theorem 2]{PuMCM}).
	
	{\bf Case(2):} $a_1=1,a_2=1, a_3=2$.\\
	In this case $M_d\cong Q'/(y)\oplus Q'/(y)\oplus Q'/(y^2)$. So we have $h_{M_d}(z)=3+z$.\\
	This implies  $v_{Q'}(det(\phi\otimes Q')) = v_{Q}(det(\phi))=4$. 
	Now for dim$M\geq 1$, from the Theorem \ref{mu(r)} depth$G(M)\geq d-1$. Also, we have two cases.\\ First case when  $h_M(z)=3+z$. In this case $G(M)$ is \CM.\\
	Second case when $h_M(z)=3+z^2$. In this case depth$G(M)=d-1$.

	{\bf Case(3):} $a_1=1,a_2=2, a_3=2$.\\
	In this case  $M_d\cong Q'/(y)\oplus Q'/(y^2) \oplus Q'/(y^2)$. So $h_{M_d}(z)=3+2z.$\\
	We  first consider the case when dim$M=3$ because if dim$M\leq2$ there is nothing to prove.\\
	Let $\underline{x}=x_1,x_2,x_3$ be a maximal $\phi$-superficial sequence. Set $M_1=M/x_1M$, $M_2=M/(x_1,x_2)M$, $M_3=M/(\underline{x})M$ and $J=(x_2,x_3)$ .\\
	We  first prove two claims:\\
	{\bf Claim(1):} $\widetilde{\mathfrak{m}^iM_2}=\mathfrak{m}^iM_2$ for all $i\geq2$.\\
	{\bf Proof} Since $\mathfrak{m}^{i+1}M_2=x_3\mathfrak{m}^iM_2$ for all $i\geq2$ so $\mathfrak{m}^{i+1}M_2:x_3=\mathfrak{m}^iM_2$ for all $i\geq2$.
	
	So from \ref{RR-2} for all $i\geq2$  we have
	$$0\rt \widetilde{\mathfrak{m}^iM_2}/\mathfrak{m}^iM_2\rt \widetilde{\mathfrak{m}^{i+1}M_2}/\mathfrak{m}^{i+1}M_2.$$ We also know that for $i\ggg 0$, $\widetilde{\mathfrak{m}^iM_2}=\mathfrak{m}^iM_2$. So it is clear that $\widetilde{\mathfrak{m}^iM_2}=\mathfrak{m}^iM_2$ for all $i\geq2$.\\
	{\bf Claim(2):} $\ell(\widetilde{\mathfrak{m}M_2}/\mathfrak{m}M_2)\leq1$. \\
	{\bf Proof:} Now since $\mu(M_2)=3$ so $\ell(\widetilde{\mathfrak{m}M_2}/\mathfrak{m}M_2)\leq3$. If $\ell(\widetilde{\mathfrak{m}M_2}/\mathfrak{m}M_2)=3$ then $M_2=\widetilde{\mathfrak{m}M_2}$ this implies that $\mathfrak{m}M_2=\mathfrak{m}\widetilde{\mathfrak{m}M_2}\sub\widetilde{\mathfrak{m}^2M_2}=\mathfrak{m}^2M_2$. So $\mathfrak{m}M_2=0$ and this is a contradiction. Now if possible assume that $\ell(\widetilde{\mathfrak{m}M_2}/\mathfrak{m}M_2)=2$  then $M_2=\langle m,l_1,l_2\rangle $ with $l_1,l_2\in \widetilde{\mathfrak{m}M_2}\setminus\mathfrak{m}M_2$ and $m\in M_2$.  Now we have  $\mathfrak{m}l_i\sub \widetilde{\mathfrak{m}^2M_2}=\mathfrak{m}^2M_2 $ for $i=1,2.$ If we set $\mathfrak{m}'=\mathfrak{m}/(x_1,x_2,x_3)$, then $\mathfrak{m}'$ is principal ideal. We also know that $\ell(\mathfrak{m}M_3)=\ell(\mathfrak{m}'M_3)$ and $\mathfrak{m}^2M_3=(\mathfrak{m}')^2M_3=0 $, this implies that $\ell(\mathfrak{m}M_3)=1$. But we have $\ell(\mathfrak{m}M_3)=2$ (from the Hilbert series of $M_3$). So $\ell(\widetilde{\mathfrak{m}M_2}/\mathfrak{m}M_2)\leq1$. 
	
	Now from \ref{RR-2} we have 
	$$0\rt (\mathfrak{m}^2M_2:x_3)/\mathfrak{m}M_2\rt \widetilde{\mathfrak{m}M_2}/\mathfrak{m}M_2 .$$
	So from claim(2), we have $b_1(x_3,M_2)=\ell(\mathfrak{m}^2M_2:x_3/\mathfrak{m}M_2)\leq 1$.\\
	Since dim$M_2=1$ we can write $h$-polynomial of $M_2$ as $h_{M_2}(z)=3+(\rho_0(M_2)-\rho_1(M_2))z+\rho_1(M_2)z^2$ where $\rho_n(M_2) =\ell(\mathfrak{m}^{n+1}M_2/{x_3\mathfrak{m}^nM_2})$ (see \ref{dm1hpol}). Since red$_{(\underline{x})}(M)\leq 2$, $\rho_n(M_2)=0$ for all $n\geq 2$. We have  $\rho_0(M_2)=\ell(\mathfrak{m}M_2/x_3M_2)= e(M_2)-\mu(M_2)=2$ and coefficients of $h_{M_2}$ are non-negative [from \ref{d=1}(1)].
	
	From short exact sequence (see \ref{exact d})
	$$0\rt \mathfrak{m}^2M_2:x_3/\mathfrak{m}M_2\rt \mathfrak{m}^2M_2/x_3\mathfrak{m}M_2\rt \mathfrak{m}^2M_3/0\rt 0$$
	we have $\rho_1=b_1(x_3,M_2)$ because $\mathfrak{m}^2M_3=0$. From Claim(2) we have $b_1(x_3,M_2)\leq1$.\\ 
	Now we have two cases.\\
	{\bf Subcase (i):} When  $b_1(x_3,M_2)=0$.\\
	This implies 
	$\rho_1(M_2)=0$ and so in this case $M_2$ has minimal multiplicity. Therefore $G(M_2)$ is \CM\ (see \ref{minmulM}).  By Sally-descent $G(M)$ is \CM \ and $h_M(z)=3+2z$.\\   
	{\bf Subcase (ii):} When $b_1(x_3,M_2)\neq0$.\\ From Claim(2) we have $\rho_1(M_2)=b_1(x_3,M_2)=1$. So in this case $h_{M_2}(z)=3+z+z^2$. Since $h_{M_2}(z)\ne h_{M_3}(z)$, depth$G(M_2)=0$ (see \ref{Property}). \\
		Since dim$M_1=2$, from \ref{Property}  we have  
		$$e_2(M_1)=e_2({M_2})-\sum b_i(x_2,M_1),$$
	where $\sum b_i(x_2,M_1)=\ell(\mathfrak{m}^{i+1}M_1:x_2/\mathfrak{m}^iM)$.\\
	We know that $e_2(M_1)$  and $\sum b_i(x_2,M_1) $ are non-negative integers (see \ref{e2>}). Also in this case $e_2({M_2})=1.$ So we have $\sum b_i(x_2,M_1)\leq 1$. 
	
	Since red$_{(\underline{x})}(M)\leq2$, from exact sequence (see \ref{exact seq}) 
	\begin{align*}
	0 \rt \mathfrak{m}^nM_1:J/\mathfrak{m}^{n-1}M_1\rt \mathfrak{m}^nM_1:x_2/\mathfrak{m}^{n-1}M_1 & \rt \mathfrak{m}^{n+1}M_1:x_2/\mathfrak{m}^{n}M_1\\
	\rt \mathfrak{m}^{n+1}M_1/J\mathfrak{m}^nM_1
	&\rt \mathfrak{m}^{n+1}{M_2}/x_3\mathfrak{m}^n{M_2}\rt 0
	\end{align*}
	we get, if $b_1(x_2,M_1)=0$ then  $b_i(x_2,M_1)=0$ for all $i\geq2$.
	
	{\bf Subcase (ii).(a):} When $\sum b_i(x_2,M_1) =0$.\\
	Now from \ref{Property},  depth$G(M_1)\geq1$. Notice that $G(M_1)$ cannot be a \CM\ module, because depth$G(M_2)=0$. So in this case depth$G(M_1)=1$. By Sally-descent depth$G(M)=2 $  
	and $h_M(z)=3+z+z^2$. 
	
	{\bf Subcase (ii).(b):} When $\sum b_i(x_2,M_1) \ne 0$.\\
	This implies $b_1(x_2,M_1)=\ell((\mathfrak{m}^2M_1:x_2)/\mathfrak{m}M_1)=1$.
	So in this case we have depth$G(M_1)=0$ (see \ref{Property}).

	From the above exact sequence we get 
	$$0\rt \mathfrak{m}^2M_1:x_2/\mathfrak{m}M_1\rt \mathfrak{m}^2M_1/J\mathfrak{m}M_1\rt \mathfrak{m}^2M_2/x_3\mathfrak{m}M_2\rt 0.$$
	
	So we have $\ell(\mathfrak{m}^2M_1/J\mathfrak{m}M_1)=\rho_1(M_2)+\ell((\mathfrak{m}^2M_1:x_2)/\mathfrak{m}M_1)=2$. We can write $h$-polynomial of $M_1$ as  $h_{M_1}(z)=h_{M_2}(z)-(1-z)^2z=3+3z^2-z^3$ (see \ref{Property}). 
	
	Consider $\overline{G(M)}=G(M)/(x_1^*,x_2^*,x_3^*)G(M)$. So we have
	$$\overline{G(M)}=G(M)/(x_1^*,x_2^*,x_3^*)G(M)=M/\mathfrak{m}M \oplus \mathfrak{m}M/(\underline{x})M \oplus \mathfrak{m}^2M/(\underline{x})\mathfrak{m}M$$
	Since $\ell(\mathfrak{m}M/(\underline{x})M)=2$ by looking at the Hilbert series of $\overline{G(M)}$ we can say that $\ell(\mathfrak{m}^2M/(\underline{x})\mathfrak{m}M)\leq2$ (see \ref{overline{G(M)}}).
	
	From exact sequence (see \ref{exact d}) $$0\rt (\mathfrak{m}^2M:x_1)/\mathfrak{m}M\rt \mathfrak{m}^2M/(\underline{x})\mathfrak{m}M\rt \mathfrak{m}^2M_1/J\mathfrak{m}M_1\rt 0$$
	we have $\ell(\mathfrak{m}^2M/(\underline{x})\mathfrak{m}M)=2$ and $\mathfrak{m}^2M:x_1=\mathfrak{m}M$ because $\ell(\mathfrak{m}^2M_1/J\mathfrak{m}M_1)=2$.
	
	Now consider $$\delta=\sum\ell(\mathfrak{m}^{n+1}M\cap (\underline{x})M/(\underline{x})\mathfrak{m}^nM).$$
	
	We know that if $\delta\leq2$ then depth$G(M)\geq d-\delta$ (see \cite[Theorem 5.1]{apprx}). In our case $\delta=2$, because $\mathfrak{m}^2M\sub (\underline{x})M$ and $\ell(\mathfrak{m}^2M/(\underline{x})\mathfrak{m}M)=2$. So depth$G(M)\geq 1$. Notice that in this case depth$G(M)=1$ because depth$G(M_1)=0$.\\ Now assume dim$M\geq4$ and $\underline{x}=x_1,\ldots,x_d$ a maximal $\phi$-superficial sequence. Set $M_{d-3}=M/(x_1,\ldots,x_{d-3})M$. We now have three cases.\\
	First case when $G(M_{d-3})$ is \CM\ and $h_{M_{d-3}}=3+2z$. By Sally-descent $G(M)$ is \CM\ and $h_M(z)=3+2z$.\\
	Second case when depth$G(M_{d-3})=2$. By Sally-descent depth$G(M)=d-1$ and $h_M(z)=3+z+z^2$.\\
	Third case when depth$G(M_{d-3})=1$. By Sally-descent depth$G(M)=d-2$ and $h_M(z)=3+3z^2-z^3$.

	{\bf Case(4):} $a_1=a_2=a_3=2$.\\
	In this case we have  $M_d\cong Q'/(y^2)\oplus Q'/(y^2)\oplus Q'/(y^2)$ and $h_{M_d}(z)=3+3z$. So we have $e(M_d)=i(M_d)\mu(M_d)=6$. For dim$M\geq1$ we know  $e(M)=i(M)\mu(M)=6$, because it is preserved modulo any $\phi$-superficial sequence.   This implies $G(M)$  is \CM\ and $h_M(z)=3+3z$  (see \cite[Theorem 2]{PuMCM}).
	\end{proof}

\begin{theorem}
	Let $(A,\mathfrak{m})$ be a complete hypersurface ring  of dimension $d$ with $e(A)=3$ and infinite residue field. Let  $M$ be an MCM $A$-module with   $\mu(M)=3$, then depth$G(M)\geq d-2$.
\end{theorem}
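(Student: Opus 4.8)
The plan is to reduce to Theorem \ref{muM=3} by splitting off the free part of $M$, mirroring the way the $\mu(M)=2$ case followed from Theorem \ref{muM=2} together with Theorem \ref{mu2frsum}. If $M$ has no free summand, then Theorem \ref{muM=3} immediately gives $\depth G(M)\geq d-2$ and there is nothing more to do, so the whole argument concentrates on the case when $M$ carries a free summand.

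In that case, since $A$ is complete I would invoke Krull--Schmidt to write $M\cong N\oplus A^s$ with $s\geq 1$ and $N$ having no free summand; if $N=0$ then $M$ is free and $G(M)\cong G(A)^s$ is \CM, so I may assume $N\neq 0$, and $N$ is again MCM. Since $\mu(M)=\mu(N)+s=3$, necessarily $\mu(N)\in\{1,2\}$. The structural fact I would use is that $G(-)$ commutes with finite direct sums, so $G(M)\cong G(N)\oplus G(A)^s$ and therefore
$$\depth G(M)=\min\{\depth G(N),\ \depth G(A)\},$$
the depth of a direct sum over the irrelevant ideal $\mathcal{M}$ being the minimum of the depths. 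Here $\depth G(A)=d$, because writing $A=Q/(g)$ with $g\in\mathfrak{n}^3\setminus\mathfrak{n}^4$ the leading form $g^*$ is a nonzero homogeneous element of the polynomial ring $G(Q)$, hence a nonzerodivisor, so $G(A)\cong G(Q)/(g^*)$ is a hypersurface and thus \CM.

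It then remains only to bound $\depth G(N)$. If $\mu(N)=1$, then $N$ has a minimal presentation $0\rt Q\xrightarrow{a}Q\rt N\rt 0$ with $a\in\mathfrak{n}$, so $N\cong Q/(a)$, $G(N)$ is \CM, and $\depth G(N)=d$. If $\mu(N)=2$, then $N$ is MCM with $\mu(N)=2$ and no free summand, so Theorem \ref{muM=2} gives $\depth G(N)\geq d-1$. Since $\depth G(N)\leq\dim G(N)=d$, in both cases $\depth G(M)=\min\{\depth G(N),\,d\}=\depth G(N)\geq d-1\geq d-2$, which completes the proof. I do not anticipate any genuine difficulty here: the real content is already carried by Theorem \ref{muM=3}, and the only facts to verify are the two standard ones invoked above, namely that the depth of a direct sum is the minimum of the summands' depths and that $G(A)$ is \CM\ for this hypersurface.
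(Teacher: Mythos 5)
Your proposal is correct and follows essentially the same route as the paper: split off the free part, handle the no-free-summand case by Theorem \ref{muM=3}, and for $M\cong N\oplus A^s$ reduce to $\mu(N)\in\{1,2\}$, using $N\cong Q/(a)$ when $\mu(N)=1$ and Theorem \ref{muM=2} when $\mu(N)=2$, together with the fact that the depth of the direct sum is controlled by $\min\{\operatorname{depth}G(N),\operatorname{depth}G(A)\}$. The only cosmetic differences are that you make the Krull--Schmidt decomposition, the identity $G(M)\cong G(N)\oplus G(A)^s$, and the Cohen-Macaulayness of $G(A)$ explicit, where the paper simply cites \cite[Proposition 1.2.9]{BH}.
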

\begin{proof}
		We have two cases here.\\
	First case when $M$ has no free summand. In this case, from the above theorem depth$G(M)\geq d-2$.\\
	Next case when $M$ has  free summand. In this case we can write $M\cong N\oplus A^s$ for some $s\geq 1$ and $N$ has no free summand. We assume $N\ne 0$, otherwise  $M$ is free and  $G(M)$ is \CM.\\
	Clearly, $N$ is a MCM $A$-module (see \cite[Proposition 1.2.9]{BH}). Notice that red$(N)\leq 2$, because red$(M)\leq 2$. Also $\mu(M)> \mu(N)$, so $\mu(N)\leq 2$.\\ 
	If $\mu(N)=1$ then we have a minimal presentation of $N$ as $0\rt Q\xrightarrow{a} Q\rt N\rt  0$, where $a\in \mathfrak{n}$. 
	This implies $N\cong Q/(a)Q$. So, $G(N)$ is \CM.\\
	If $\mu(N)=2$ then depth$G(N)\geq d-1$ (from Theorem \ref{muM=2}).	
	 
	We know that (see \cite[Proposition 1.2.9]{BH})
	
	depth$G(M)\geq $ min\{depth$G(N)$, depth$G(A)$\}$=$ depth$G(N)$.\\
	So in this case depth$G(M)\geq d-1$. 
	\end{proof}

\section{Examples}\label{Examp} {\bf Case(1)} If $\mu(M)=2$ then we have

Take $Q=k[[x,y]]$, $\mathfrak{n}=(x,y)$
\begin{enumerate}
	
	\item $\phi = \begin{pmatrix}
	a & b \\
	c & d
	\end{pmatrix} $ with $ad-bc\ne0$ and $a,b,c,d\in \mathfrak{n}\setminus \mathfrak{n}^2$, then $G(M)$ is \CM \ and $h_M(z)=2$.
	\item $\phi = \begin{pmatrix}
	y^{a_1} & 0 \\
	0 & y^{a_2}
	\end{pmatrix} $ where $1\leq a_i\leq 3$ then $G(M)$ is \CM .
	\item $\phi = \begin{pmatrix}
	y^2 & 0 \\
	x^2 & y
	\end{pmatrix} $ then  $G(M)$ is \CM \ and $h_M(z)=2+z.$ Because if we set $e_1=(1,0)^T$ and $e_2=(0,1)^T$ then $M\cong (Q \oplus Q)/\langle y^2e_1+x^2e_2,ye_2 \rangle$. We can easily calculate $\ell(M/\mathfrak{n}M)=2$, $\ell(\mathfrak{n}M/\mathfrak{n}^2M)=3$ and $\ell(\mathfrak{n}^2M/\mathfrak{n}^3M)=3$. Now since $y^3M=0$ and dim$M=1$, we get deg$h_M(z)\leq 2$. From the above calculation it is clear $h_M(z)=2+z$ and this implies $M$ has minimal multiplicity. 
	\item $\phi = \begin{pmatrix}
	y^2 & 0 \\
	x & y
	\end{pmatrix} $ then  depth$G(M)=0$    and $h_M(z)=2+z^2.$ Because if we set $e_1=(1,0)$ and $e_2=(0,1)$ then $M\cong (Q\oplus Q)/\langle y^2e_1+xe_2,ye_2\rangle$. Now it is clear that $\overline{e_2}\in \widetilde{\mathfrak{n}M}\setminus \mathfrak{n}M$. So $\widetilde{\mathfrak{n}M} \ne \mathfrak{n}M$ and this implies $G(M)$ has depth zero. Also, dim$M=1$ and $\rho_2(M)=0$ so, deg$h_M(z)\leq2$. It is also clear that $\ell(M/\mathfrak{n}M)=2$, $\ell(\mathfrak{n}M/\mathfrak{n}^2M)=2$ because $x\overline{e_2}=y^2\overline{e_1}$. Similar calculation gives that $\ell(\mathfrak{n}^2M/\mathfrak{n}^3M)=3$ and $y^3M=0$. So, $h_M(z)=2+z^2$.
	
\end{enumerate}

{\bf Case(2)} Now if $\mu(M)=3$ then examples are:\\
Take $Q=k[[x,y,z]]$ and $\mathfrak{n}=(x,y,z)$.
\begin{enumerate}
	\item   $\phi= \begin{pmatrix}
	x & y & z\\
	x^2 & x^2 & 0\\
	0 & 0 & x^2
	\end{pmatrix} $ then $x\overline{e_1}= -x^2\overline{e_2}$, $y\overline{e_1}=-x^2\overline{e_2}$ and $z\overline{e_1}=-x^2\overline{e_3}$. From here we get $(x-y)\overline{e_1}=0$, $x^2(x-y)\overline{e_2}=(x-y)(-x)\overline{e_1}=0$ and $x^2(x-y)\overline{e_3}=(x-y)(-z)\overline{e_1}=0$. Therefore, $x^2(x-y)\overline{e_i}=0$ for $i=1,2,3$ and $\overline{e_1}\in \widetilde{\mathfrak{n}M}\setminus\mathfrak{n}M.$. So  $M$ is $Q/(x^2(x-y))$-module and depth$G(M)=0$.
	\item  $\phi= \begin{pmatrix}
	x & y & 0\\
	x^2 & x^2 & 0\\
	0 & 0 & x^2
	\end{pmatrix} $ then depth$G(M)=1$, because $z^*$ is $G(M)-$regular and after going modulo $z^*$ we get depth${G({{N}})}=0$, here ${N}=M/zM$. Notice that $\overline{e_1}\in \widetilde{\mathfrak{n}N}\setminus\mathfrak{n}N$.  Here $M$ is $Q/(x^2(x-y))$-module, because $x^2(x-y)\overline{e_i}=0$ for $i=1,2,3.$
	\item  $\phi= \begin{pmatrix}
	x & 0 & 0\\
	0 & x^2 & 0\\
	0 & 0 & x^2
	\end{pmatrix} $ then depth$G(M)=2$, i.e. $G(M)$ is \CM.
\end{enumerate}

{\bf Case(3):} If $\mu(M)=r$; take  $Q=k[[x,y]]$, $\mathfrak{n}=(x,y)$ 	
(This is also the case when $e(M)=\mu(M)i(M)+1$).
\begin{enumerate}
	\item $[\phi]_{r\times r}= \begin{pmatrix}
	y^2 & 0 &0 &\cdots & 0\\
	x^2 & y & 0 &\cdots & 0\\
	0   & 0 & y &\cdots & 0\\
	\vdots & \vdots &\vdots& \ddots &0\\
	0      &  0     &  0   & \cdots & y
	\end{pmatrix} $ then $det\in\mathfrak{n}^{r+1}\setminus\mathfrak{n}^{r+2}$, $G(M)$ is \CM \ and $h_M(z)=r+z.$
	
	\item  $[\phi]_{r\times r}= \begin{pmatrix}
	y^2 & 0 &0 &\cdots & 0\\
	x & y & 0 &\cdots & 0\\
	0   & 0 & y &\cdots & 0\\
	\vdots & \vdots &\vdots& \ddots &0\\
	0      &  0     &  0   & \cdots & y
	\end{pmatrix} $then $det\in\mathfrak{n}^{r+1}\setminus\mathfrak{n}^{r+2}$, depth$G(M)=0$ because $\widetilde{\mathfrak{n}M}\neq \mathfrak{n}M$ as $\overline{e_2}\in \widetilde{\mathfrak{n}M}\setminus\mathfrak{n}M$.  and $h_M(z)=r+z^2.$

\end{enumerate}


\begin{thebibliography}{10}
	\bibitem{BH}
	W.~Bruns and J.~Herzog, \emph{{Cohen-Macaulay rings}}, vol.~39, Cambridge
	studies in advanced mathematics, Cambridge University Press,~Cambridge, 1993.
	
     \bibitem{Eisenbud}
    D. Eisenbud, \emph{Homological algebra on complete intersections with an application to group representations}, Trans. Amer. Math. Soc., 260 (1980), 35-64.
     
     	\bibitem{heinzer}
     W.~Heinzer, B.~Johnston, D.~Lantz, K.~Shah,
     \emph{The Ratliff -Rush ideals in a Noetherian ring: a survey, in: Methods in Module Theory}(Colorado
     Springs, CO, 1991), in: Lecture Notes in Pure and Appl. Math., vol. 140, Dekker, New York, 1993, pp. 149–159.
     
     
     \bibitem{Naghipour}
     R.~Naghipour,
     \emph{Ratliff-Rush closures of ideals with respect to a Noetherian module},J. Pure Appl. Algebra 195 (2) (2005) 167–172.
     
	\bibitem{Pu0}
		T.~J. Puthenpurakal,
	\emph{Hilbert coefficients of a Cohen--Macaulay module}, J. Algebra \textbf{264}  (2003), no.~1, 82--97.
	
	
	\bibitem{PuMCM}
	\bysame
	\emph{The Hilbert function of a maximal Cohen-Macaulay module}, Math. Z. \textbf{251} (2005), no.~3, 551--573.
	
	
	
	\bibitem{Pu1}
		\bysame,
	\emph{Ratliff-{R}ush filtration, regularity and depth of higher associated graded modules. {I}},
	J. Pure Appl. Algebra \textbf{208} (2007), no.~1, 159--176.
	
	\bibitem{apprx}
	  \bysame,
	  \emph{Complete intersection approximation, dual filtrations and applications}  arXiv preprint arXiv:0807.0471,(2008).
	
	\bibitem{Pu2}
		\bysame,
	\emph{Ratliff-{R}ush filtration, regularity and depth of higher associated graded modules. {II}},
	J. Pure Appl. Algebra \textbf{221} (2017), no.~3,
	611--631.
	
		\bibitem{Ratliff}
	L.~J.~Ratliff, D.~Rush,
	\emph{Two notes on reductions of ideals}, Indiana Univ. Math. J. 27 (1978) 929–934.
	
		\bibitem{rv}
	M.~E.~Rossi and G.~Valla,
	\emph{A conjecture of J. Sally},
	Comm. Algebra 24 (1996), no. 13, 4249–-4261.
	
	\bibitem{Rossi}
	 Maria Evelina Rossi and  Giuseppe Valla,
	\emph{Hilbert functions of filtered modules}, vol.9, Springer Science \& Business Media,2010
	
	\bibitem{Sbook}
	J.~D.~Sally
	\emph{Number of generators of ideals in local rings}, Lect. Notes Pure Appl. Math., vol.~35, M. Dekker, 1978.
		\bibitem{S}
	\bysame,
	\emph{Tangent cones at Gorenstein singularities},
	Compositio Math. 40 (1980) 167–-175.
	
	\bibitem{singh}
	B.~Singh,
	\emph{Effect of a permissible blowing-up on the local Hilbert functions}, Invent. Math. 26(1974), 201-212.
\end{thebibliography}
\end{document}